\numberwithin{equation}{section}
\theoremstyle{plain}
\newtheorem{theorem}{Theorem}[section]
\newtheorem{lemma}[theorem]{Lemma}
\newtheorem{proposition}[theorem]{Proposition}
\newtheorem{conjecture}[theorem]{Conjecture}
\newtheorem{question}[theorem]{Question}
\theoremstyle{definition}
\newtheorem{definition}[theorem]{Definition}
\theoremstyle{remark}
\newtheorem{remark}[theorem]{Remark}
\newtheorem{case[theorem]}{Case}
\newcommand{\nothing}[1]{{}}
\def\cD{{\mathcal D}}
\def\cH{{\mathcal H}}
\def\cL{{\mathcal L}}
\def\({\left(}
\def\){\right)}
\def\[{\left[}
\def\]{\right]}
\def\<{\langle}
\def\>{\rangle}
\def\F{\mathbb{F}}
\def\R{\mathbb{R}}
\title[Size of dot product sets ]{Size of dot product sets determined by pairs of  subsets of vector spaces over  finite fields}
\author{ Doowon Koh and Youngjin Pi}
\date{\today}
\address{Department of Mathematics\\
Chungbuk National University \\
Cheon\-gju city, Chungbuk-Do 361-763 Korea}
\email{koh131@chungbuk.ac.kr}
\address{Department of Mathematics\\
Chungbuk National University \\
Cheon\-gju city, Chungbuk-Do 361-763 Korea}
\email{pi@chungbuk.ac.kr}
\keywords{ Dot product sets,  finite fields, paraboloids}
\thanks{The first author was supported by  the research grant of the Chungbuk National University in 2012 }
\subjclass[2010]{Primary: 52C10 ; Secondary 11T23}
\begin{document}

\begin{abstract}
In this paper we study the cardinality of the dot product set generated by two subsets of vector spaces over finite fields.
We notice that the results on the dot product problems for one set can be simply extended to two sets.
Let $E$ and $F$ be  subsets of the $d$-dimensional vector space $\mathbb F_q^d$ over a finite field $\mathbb F_q$ with $q$ elements.
As a new result, we prove that  if  $E$ and $F$ are subsets of the paraboloid and $|E||F|\geq C q^d$ for some large $C>1,$ then $|\Pi(E,F)|\geq c q$ for some $0<c<1.$
In particular, we find a connection between the size of the dot product set and the number of lines through both the origin and a nonzero point in the given set $E.$
As an application of this observation, we obtain more sharpened results on the generalized dot product set problems.
The discrete Fourier analysis and geometrical observation play a crucial role in proving our results.
\end{abstract}
\maketitle

\section{Introduction}

 How many distinct distances can be determined by a finite subset of $\mathbb R^d$? In 1946, this question was addressed by  Erd\H os \cite{Er46}. 
This problem is well known as the Erd\H os distance problem in the Euclidean space.
More generally,  given $E, F \subset \mathbb R^d $ with $|E|, |F|<\infty$,  one may ask for  the cardinality of the distance set $\Delta(E,F)$ in terms of the sizes of $E$ and $F$,
where $| \cdot |$ denotes the cardinality of a finite set of $\mathbb R^d$ and  the distance set $\Delta(E,F)$ is defined by
$$ \Delta(E,F)=\left\{\sqrt{(x_1-y_1)^2+ \cdots+ (x_d-y_d)^2} : x\in E, y\in F\right\}.$$
If $E=F,$ then we shall write $\Delta(E)$ for $\Delta(E,F).$
The first nontrivial result on this problem was obtained by Erd\H os \cite{Er46}.
He proved that if  $E\subset \mathbb R^d$, then $|\Delta(E)| \geq c |E|^{1/d}$ for some constant $0<c<1$ independent of $|E|.$ 
In addition, he conjectured that for every $\varepsilon>0$ there exists $c_\varepsilon >0$ such that  $|\Delta(E)| \geq c_\varepsilon |E|^{2/d-\varepsilon}.$
The conjecture on the plane was recently solved by Guth and Katz \cite{GK10} but it is still open for higher dimensions (see, for example, \cite{SV08, KT04, SV04}).\\

As a continuous version of the Erd\H os distance problem, the Falconer distance problem has been studied.
The Falconer distance conjecture says that  if    $E $ is a compact subset of $\mathbb R^d, d\geq 2,$ and  the Hausdorff dimension of $E$ is greater than $d/2$, then 
the distance set $\Delta(E)$ has a positive Lebesgue measure. Since this conjecture was first addressed by Falconer \cite{Fa85},  much attention has been given to this problem but it has not been solved for any dimensions.  Using the decay estimate of the Fourier transform on the sphere,   Falconer \cite{Fa85} firstly obtained that 
$$ \mbox{dim}_\cH(E)> \frac{d+1}{2} \Longrightarrow \cL(\Delta(E))>0,$$
where $\mbox{dim}_\cH(E)$ denotes the Hausdorff dimension of $E\subset \R^d$ and $\cL(\Delta(E))$ denotes one-dimensional Lebesgue measure of the distance set $\Delta(E).$
The Falconer's result was generalized by  Mattila  who proved in \cite{Ma87} that for any compact sets $E, F \subset \R^d,$
$$\mbox{dim}_\cH(E)+\mbox{dim}_\cH(F)> d+1 \Longrightarrow \cL(\Delta(E,F))>0.$$
The currently best known results on the Falconer problem are due to Wolff \cite{Wo99} for two dimensions  and Erdo\~{g}an  \cite{Er05} for higher dimensions.
Their results say that if $E\subset \R^d, d\geq 2,$ with $\mbox{dim}_\cH(E) > d/2+1/3,$ then $\cL(\Delta(E))>0.$\\

In recent years,   the Erd\H os-Falconer distance problems  have been reconstructed in the finite field setting.
Let $\mathbb F_q^d$ denote the $d$-dimensional vector space over a finite field $\F_q$ with $q$ elements. 
Throughout the paper, we always assume that the characteristic of $\F_q$ is greater than two.
Given $E, F\subset \F_q^d, d\geq 2, $ the distance set, denoted by $\cD(E,F)$, is defined by
$$ \cD(E,F)=\{ \|x-y\|\in \mathbb F_q : x\in E, y\in F\},$$
where $\| \alpha\|= \alpha_1^2 + \cdots+ \alpha_d^2$ for $\alpha=(\alpha_1, \ldots, \alpha_d) \in \F_q^d.$
We point out that the function $\| \cdot\|$ on $\F_q^d$ is not a standard norm but its image is invariant under the rotations in $\F_q^d.$
The Erd\H os distance problem  in the finite field setting is to find the connection between $|\cD(E,F)|$ and cardinalities of $E, F \subset \F_q^d.$
In the prime field setting,  the Erd\H os distance problem in two dimensions was initially posed and studied  by Bourgain-Katz-Tao \cite{BKT04}.
In 2007, Iosevich and Rudnev \cite{IR07} developed the problem in arbitrary dimensional vector spaces over general finite fields.
Using the Kloosterman sum estimate, Iosevich and Rudnev \cite{IR07} obtained that if $E\subset \mathbb F_q^d$, then 
\begin{equation}\label{ARF} |\cD(E,E)| \gg_c \min \left\{ q, \frac{|E|}{q^{\frac{d-1}{2}}} \right\}. \end{equation}
\begin{remark}Here and throughout this paper, the notation $A\gg_c B$ for $A,B>0$ means that there exists a constant $0<c<1$ depending only on the dimension $d$ such that $A \geq c B.$
On the other hand,  we shall use the notation $A\gg_C B$ to indicate that there exists a sufficient large constant $C>1$ depending only on the dimension $d$ such that $A \geq C B.$ The constants $0<c<1$ and $C>1$ may be changed from one line to another line but they are independent of the size of the underlying finite field $\mathbb F_q.$
We also write  $B\ll_C A$ for  $A\gg_c B. $  $A\sim B$ means that there exist constants $0<c<1, 1<C$ such that $c B\leq A\le CB,$
where $c,C$ depend only on the dimension $d.$ \end{remark}

As a finite field version of the Falconer distance problem,  Iosevich and Rudnev \cite{IR07}  conjectured that if $E\subset \mathbb F_q^d$ with $|E|\gg_C q^{d/2}$, then $|\cD(E,E)|\gg_c q.$
As a corollary of  (\ref{ARF}), they also obtained that  $|\cD(E,E)|\gg_c q$  as long as $|E|\gg_C q^{(d+1)/2}.$
The authors in \cite{HIKR10} constructed arithmetic examples which show that the conjecture by Iosevich and Rudnev is not true in odd dimensions and the exponent $(d+1)/2$ gives a sharp result on the Falconer distance problem in odd dimensional vector spaces over $\mathbb F_q$. However, it has been believed that the conjecture may be true in even dimensions, in part because  the authors in \cite{CEHIK09} recently showed that 
if $E \subset {\mathbb F}_q^2$ with $|E|\gg_C q^{4/3}$, then 
$ |\cD(E,E)| \gg_c q.$ When $d=2$, the exponent $4/3$ is better than the exponent $(d+1)/2$ which gives a sharp exponent in odd dimensions.
This result for dimension two was generalized by  Koh and Shen \cite{KS} who proved that if $E, F\subset \mathbb F_q^2$ with $|E||F|\gg_C q^{8/3},$ then $|\cD(E,F)|\gg_c q.$
In \cite{KS13}, they also stated the following conjecture  which generalizes the conjecture originally stated in \cite{IR07} for even dimensions.
\begin{conjecture}\label{KohCon} Let $d\geq 2$ be an even integer. Suppose $E, F\subset \mathbb F_q^d.$  If $|E||F|\gg_C q^d,$ then $|\cD(E,F)|\gg_c q.$
\end{conjecture}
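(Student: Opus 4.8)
The plan is to run the standard Cauchy--Schwarz-and-Fourier machinery underlying (\ref{ARF}) and all of its refinements, and to pinpoint where the even-dimensional exponent $q^{d/2}$ must enter. For $t\in\F_q$ let $S_t(x)$ denote the indicator function of the sphere $\{x\in\F_q^d:\|x\|=t\}$, and define the distance-counting function
$$\nu(t)=|\{(x,y)\in E\times F:\|x-y\|=t\}|=\sum_{x\in E,\,y\in F}S_t(x-y).$$
Since $\sum_{t}\nu(t)=|E||F|$ and $\nu$ is supported on $\cD(E,F)$, the Cauchy--Schwarz inequality gives
$$|E|^2|F|^2=\Big(\sum_{t\in\cD(E,F)}\nu(t)\Big)^2\le |\cD(E,F)|\sum_{t\in\F_q}\nu(t)^2.$$
Thus the whole problem reduces to a second-moment (``distance energy'') estimate: it suffices to prove that $\sum_{t}\nu(t)^2\ll_C |E|^2|F|^2/q$ whenever $|E||F|\gg_C q^d$, for then $|\cD(E,F)|\gg_c q$ follows at once.

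For the second step I would expand the energy as
$$\sum_{t}\nu(t)^2=|\{(x,x',y,y')\in E^2\times F^2:\|x-y\|=\|x'-y'\|\}|,$$
insert the identity $[\|u\|=\|v\|]=q^{-1}\sum_{s\in\F_q}\chi\big(s(\|u\|-\|v\|)\big)$ for a fixed nontrivial additive character $\chi$ of $\F_q$, and apply the Fourier transform together with the standard Gauss-sum evaluation of $\sum_{z}\chi(s\|z\|)\chi(z\cdot m)$. The term $s=0$ produces the main term $|E|^2|F|^2/q$, exactly the quantity we want to dominate. The surviving terms $s\ne0$ form the error, which after Plancherel is controlled by a weighted sum of $|\widehat{E}(m)|^2|\widehat{F}(m)|^2$ against the Fourier transforms of the spheres. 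Bounding this error only through the pointwise decay of $\widehat{S_t}$ and the Plancherel identity $\sum_{m}|\widehat{E}(m)|^2=q^{-d}|E|$ merely reproduces the classical threshold $q^{(d+1)/2}$ of (\ref{ARF}), so a crude termwise route cannot reach $q^{d/2}$.

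The hard part, and the reason the statement is still only a conjecture, is closing the gap from $q^{(d+1)/2}$ down to $q^{d/2}$ in even dimensions. Here one must exploit more than $L^2$ Fourier decay: the even-dimensional case is distinguished by a cleaner normalization in the Gauss sums attached to $\|\cdot\|$, and the goal would be to reinterpret the error term as an incidence count between points and spheres (equivalently, an additive-energy bound for the underlying quadric) rather than estimating it term by term. This is precisely the mechanism that produced the exponent $q^{4/3}$ in the plane in \cite{CEHIK09} and \cite{KS}, and promoting such an incidence/energy bound to all even $d$ is the central obstacle. For the two-set formulation a preliminary Cauchy--Schwarz decouples $E$ and $F$, so that once a single-set energy estimate of the required strength is available, the asymmetric hypothesis $|E||F|\gg_C q^d$ is handled by balancing the roles of $|E|$ and $|F|$.
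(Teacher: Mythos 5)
The statement you were asked about is Conjecture~\ref{KohCon}, and the paper does not prove it --- it explicitly states that ``this conjecture has not been solved'' and only records partial evidence: the exponent $(d+1)/2$ from (\ref{ARF}), the two-dimensional result $|E||F|\gg_C q^{8/3}$ of Koh--Shen, and the cases where one of the sets is a Salem set or lies on a sphere/paraboloid. So there is no paper proof to compare against, and the only question is whether your proposal actually closes the conjecture. It does not, and you say so yourself: after the (correct) Cauchy--Schwarz reduction to the second-moment bound $\sum_t \nu(t)^2 \ll_C |E|^2|F|^2/q$, and after the (correct) observation that the termwise Gauss-sum/Plancherel estimate only reproduces the threshold $q^{(d+1)/2}$, your third paragraph names the missing incidence/energy estimate as ``the central obstacle'' rather than proving it. A proof proposal whose key step is an acknowledgment that the key step is open is not a proof; the entire mathematical content needed to go from $q^{(d+1)/2}$ to $q^{d/2}$ is absent.

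Two further points in your sketch are shakier than you present them. First, the claim that in even dimensions there is ``a cleaner normalization in the Gauss sums'' is not a mechanism: the known obstruction in odd dimensions comes from the arithmetic examples of \cite{HIKR10} built on spheres of zero radius, and nothing in the Gauss-sum normalization by itself rules out analogous behavior at the $q^{d/2}$ threshold in even dimensions --- that is exactly why even the planar case required the genuinely new incidence arguments of \cite{CEHIK09}, and why those arguments stop at $q^{4/3}$ rather than $q^{1}\cdot q^{1}=q^2$ split as $q^{d/2}$ per set. Second, your closing assertion that ``a preliminary Cauchy--Schwarz decouples $E$ and $F$, so that once a single-set energy estimate of the required strength is available'' the two-set case follows, is unjustified: the passage from one set to two sets in \cite{KS} was not a formal Cauchy--Schwarz consequence of the single-set bound but required reworking the energy estimate for pairs, and in general $\sum_t\nu_{E,F}(t)^2$ is not controlled by the geometric mean of the single-set energies without loss. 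What your write-up does get right is the honest diagnosis of why the problem is open; but as a proof of Conjecture~\ref{KohCon} it has a gap coextensive with the conjecture itself.
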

This conjecture has not been solved but there are some specific sets which yield the conclusion of the conjecture for any dimensions $d\geq 2.$ 
For example, Iosevich and Rudnev \cite{IR07} showed that the conclusion of the conjecture holds if $E=F$ and $E$ is a Salem set.
Here, we recall that a set $E\subset \mathbb F_q^d$ is called a Salem set if $|\widehat{E}(m)| \ll_C \sqrt{|E|} / q^d$ for all $m\in \mathbb F_q^d\setminus \{(0,\ldots, 0)\}.$
Considering the number of vectors determined by two sets $E,F \subset \mathbb F_q^d,$  Koh and Shen \cite{KS} deduced that if one of sets $E,F\subset \mathbb F_q^d$ is a Salem set, then
the conclusion of Conjecture \ref{KohCon} follows for any dimensions $d\geq 2.$\\

By analogy with the distance set $\cD(E,F)$, if $E, F\subset \mathbb F_q^d,$ then one can define a set of dot products as
$$ \Pi(E,F)=\{x\cdot y\in \mathbb F_q: x\in E, y\in F\}.$$
In the case when $E=F\subset \mathbb F_q^d$, the authors in \cite{HIKR10} investigated the cardinality of $|\Pi(E,F)|.$
They proved the following result.
\begin{proposition}\label{ProAlex}
 If $E\subset \mathbb F_q^d$ with $|E|\gg_C q^{(d+1)/2}$, then $|\Pi(E,E)|\gg_c q.$
\end{proposition}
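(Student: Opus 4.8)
The plan is to run the standard second-moment (energy) method powered by discrete Fourier analysis and to reduce everything to a single exponential-sum estimate. First I would discard the origin: since $0\cdot y=0$ for every $y$, replacing $E$ by $E\setminus\{0\}$ changes $|E|$ by at most one and only shrinks $\Pi(E,E)$, so I may assume $0\notin E$. Writing $\lambda(t)=\#\{(x,y)\in E\times E: x\cdot y=t\}$, we have $\sum_t\lambda(t)=|E|^2$, with the sum supported on $\Pi(E,E)$, so Cauchy--Schwarz gives
$$|E|^4=\Big(\sum_{t\in\Pi(E,E)}\lambda(t)\Big)^2\le |\Pi(E,E)|\sum_t\lambda(t)^2.$$
Thus it suffices to prove the second-moment bound $\sum_t\lambda(t)^2\le 2|E|^4/q$ under the hypothesis $|E|\gg_C q^{(d+1)/2}$.

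Next I would expand the second moment using a fixed nontrivial additive character $\chi$ of $\F_q$. From $\lambda(t)=q^{-1}\sum_{s\in\F_q}\sum_{x,y\in E}\chi(s(x\cdot y-t))$ and orthogonality in $t$ one obtains
$$\sum_t\lambda(t)^2=\frac{|E|^4}{q}+\frac1q\sum_{s\ne 0}|S(s)|^2,\qquad S(s)=\sum_{x,y\in E}\chi(s\,x\cdot y),$$
where the term $s=0$ yields the expected main term $|E|^4/q$. The problem is therefore reduced to showing $\sum_{s\ne 0}|S(s)|^2\ll_C q^{d}|E|^2$; indeed, when $|E|\gg_C q^{(d+1)/2}$ this error is at most $|E|^4/q$, giving $\sum_t\lambda(t)^2\le 2|E|^4/q$ and hence $|\Pi(E,E)|\gg_c q$.

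The heart of the matter is the estimate for $\sum_{s\ne0}|S(s)|^2$, and here I would bring in the Fourier transform $\widehat{E}(m)=q^{-d}\sum_{x\in E}\chi(-m\cdot x)$. Summing in $y$ first gives $S(s)=q^{d}\sum_{x\in E}\widehat E(-sx)=q^{d}\sum_{m\in -sE}\widehat E(m)$, since $x\mapsto -sx$ is a bijection for $s\ne0$. Applying Cauchy--Schwarz over the $|E|$ points of $-sE$ yields $|S(s)|^2\le q^{2d}|E|\sum_{m\in -sE}|\widehat E(m)|^2$, and summing in $s\ne0$ reorganizes the right-hand side as $q^{2d}|E|\sum_{m\ne0}|\widehat E(m)|^2\,r(m)$, where $r(m)$ denotes the number of points of $E$ lying on the line through the origin and $m$. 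Bounding $r(m)\le q$ trivially and invoking Plancherel, $\sum_m|\widehat E(m)|^2=|E|/q^{d}$, gives $\sum_{s\ne0}|S(s)|^2\le q^{2d}|E|\cdot q\cdot(|E|/q^{d})=q^{d+1}|E|^2$. Dividing by $q$ produces exactly the error term $q^{d}|E|^2$ required above, completing the argument.

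The main obstacle is precisely this final estimate. The only place where information is discarded is the trivial bound $r(m)\le q$ on the number of points of $E$ collinear with the origin, and it is exactly this bound that forces the exponent $(d+1)/2$: a set $E$ with many points on a single line through the origin makes $r(m)$ as large as $q$, so the estimate cannot be improved at this level of generality. Retaining $r(m)$ and controlling the number of lines through the origin that meet $E$ is the refinement announced in the abstract, and is what should yield the sharper results later in the paper.
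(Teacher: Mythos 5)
Your proof is correct and takes essentially the same route as the paper, which obtains this statement as the $E=F$ case of Theorem \ref{ThmAlex} via Lemma \ref{keylem1} and Lemma \ref{mainlem1} with $\beta=1$: Cauchy--Schwarz reduces the problem to the second moment of the counting function, character orthogonality isolates the main term $|E|^4/q$, and the error term reorganizes into $q^{2d-1}|E|\sum_{m\neq 0}|E\cap l_m||\widehat{E}(m)|^2$, which is bounded using the trivial line estimate $|E\cap l_m|\leq q$ and Plancherel. The only cosmetic difference is the order of operations: you expand in characters exactly and then apply Cauchy--Schwarz to $S(s)$, whereas the paper applies Cauchy--Schwarz pointwise in $t$ before expanding, but both arrive at the identical error term and the identical threshold $|E|^2\gg_C q^{d+1}$.
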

In addition, they provided an example to show that  the exponent $(d+1)/2$ in Proposition \ref{ProAlex} can not be improved on a general set $E.$
However, they made a remarkable observation that if $E$ lies on a unit sphere, then 
Proposition \ref{ProAlex} can be improved. More precisely, they proved the following.
\begin{proposition}\label{Prosphere}
Suppose that $E\subset S_1:=\{x\in \mathbb F_q^d: x_1^2+ \cdots+x_d^2=1\}$. If  $|E|\gg_C q^{d/2},$ then   $|\Pi(E,E)|\gg_c q.$
\end{proposition}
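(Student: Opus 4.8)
The plan is to control the second moment of the dot-product counting function and then feed it into Cauchy--Schwarz. For $t\in\F_q$ set $\nu(t)=|\{(x,y)\in E\times E: x\cdot y=t\}|$, so that $\sum_{t}\nu(t)=|E|^2$ while $\nu(t)=0$ whenever $t\notin\Pi(E,E)$. Cauchy--Schwarz then yields the standard lower bound
$$|\Pi(E,E)|\ \geq\ \frac{\left(\sum_t\nu(t)\right)^2}{\sum_t\nu(t)^2}\ =\ \frac{|E|^4}{\sum_t\nu(t)^2},$$
so the whole problem reduces to the fourth-moment estimate $\sum_t\nu(t)^2\ll_C |E|^4/q$ under the hypothesis $|E|\gg_C q^{d/2}$.

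Next I would expand the second moment with a nontrivial additive character $\chi$ of $\F_q$. Writing $T_s=\sum_{x,y\in E}\chi(s\,x\cdot y)$, orthogonality gives $\sum_t\nu(t)^2=q^{-1}\sum_{s\in\F_q}|T_s|^2=q^{-1}\big(|E|^4+\sum_{s\neq0}|T_s|^2\big)$, since $T_0=|E|^2$. The task is therefore to show $\sum_{s\neq0}|T_s|^2\ll_C q^d|E|^2$, which contributes a term of size $|E|^2q^{d-1}$ that is dominated by $|E|^4/q$ exactly when $|E|^2\gg_C q^d$. To estimate $T_s$ I would pass to the Fourier transform $\widehat E(m)=q^{-d}\sum_{x\in E}\chi(-x\cdot m)$; summing the inner variable gives $T_s=q^d\sum_{x\in E}\widehat E(-sx)$, and Cauchy--Schwarz in the variable $x$ produces $|T_s|^2\leq q^{2d}|E|\sum_{x\in E}|\widehat E(-sx)|^2$.

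The crux of the argument --- and the step I expect to carry all the weight --- is the summation over $s\neq0$. For a fixed $x\in E$, as $s$ runs through $\F_q^\times$ the argument $-sx$ traces out the nonzero points of the line through the origin in the direction $x$. Here the hypothesis $E\subset S_1$ enters decisively: any line through the origin meets $S_1$ in at most two points, since on $\{tx:t\in\F_q\}$ one has $\|tx\|=t^2\|x\|$, and in characteristic greater than two the equation $t^2\|x\|=1$ has at most two solutions. Hence each such line carries at most two points of $E$, and the punctured lines attached to distinct directions are pairwise disjoint subsets of $\F_q^d\setminus\{0\}$. Grouping the double sum by lines and then enlarging to all nonzero frequencies, I obtain
$$\sum_{x\in E}\ \sum_{s\neq0}|\widehat E(-sx)|^2\ \leq\ 2\sum_{m\neq0}|\widehat E(m)|^2\ \leq\ 2\sum_{m\in\F_q^d}|\widehat E(m)|^2\ =\ \frac{2|E|}{q^d},$$
the last equality being Plancherel. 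This immediately gives $\sum_{s\neq0}|T_s|^2\leq q^{2d}|E|\cdot 2|E|q^{-d}=2q^d|E|^2$.

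Combining the pieces, $\sum_t\nu(t)^2\leq |E|^4/q+2|E|^2q^{d-1}$; when $|E|\gg_C q^{d/2}$ with $C$ sufficiently large the first term dominates, so that $\sum_t\nu(t)^2\ll_C |E|^4/q$ and therefore $|\Pi(E,E)|\gg_c q$. I regard the line-counting observation as the essential point. The naive alternative --- replacing $E$ by the whole sphere $S_1$ and invoking the Fourier decay of the sphere --- is too wasteful and only recovers the weaker exponent $(d+1)/2$ of Proposition \ref{ProAlex}. It is precisely the fact that lines through the origin meet the sphere in at most two points that converts the crude $\ell^2$ estimate into the sharp Plancherel bound and hence produces the improved exponent $d/2$.
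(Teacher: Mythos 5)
Your proof is correct and follows essentially the same route as the paper's machinery (Lemma \ref{keylem1} combined with Lemma \ref{mainlem1} at $\beta=0$, which is how Theorem \ref{Thmsphere} generalizes this proposition): Cauchy--Schwarz on the counting function $\nu$, character/Fourier expansion of the second moment, Cauchy--Schwarz in the $x$ variable, and then the decisive geometric fact that a line through the origin meets $S_1$ in at most two points, fed into Plancherel. Your quantity $\sum_{x\in E}\sum_{s\neq 0}|\widehat E(-sx)|^2$ is exactly the paper's ${\mathfrak B}(E,E)$, so the two arguments coincide step for step.
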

As a direct application of this proposition, they deduced the following  Erd\H os-Falconer distance result on the unit sphere.
\begin{proposition}\label{SphereDistance} Let $S_1=\{x\in \mathbb F_q^d: x_1^2+x_2^2+ \cdots + x_d^2=1\}.$  If $d\geq 3,$ $E\subset S_1$ and $|E|\gg_C q^{d/2},$ then  $|\cD(E,E)|\gg_c q.$
\end{proposition}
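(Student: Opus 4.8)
The plan is to deduce Proposition \ref{SphereDistance} directly from Proposition \ref{Prosphere} by exploiting the fact that, on the unit sphere, the distance between two points is an affine function of their dot product. First I would record the elementary identity: for any $x,y\in S_1$ we have $\|x\|=\|y\|=1$, and hence
\begin{equation*}
\|x-y\|=\sum_{i=1}^d (x_i-y_i)^2 = \|x\| - 2\,(x\cdot y) + \|y\| = 2 - 2\,(x\cdot y).
\end{equation*}
Thus every distance $\|x-y\|$ arising from $x,y\in E\subset S_1$ is the image of the dot product $x\cdot y$ under the single map $t\mapsto 2-2t$ on $\F_q$.

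Next I would observe that, because the characteristic of $\F_q$ is greater than two, the affine map $t\mapsto 2-2t$ is a bijection of $\F_q$ onto itself, with inverse $s\mapsto (2-s)/2$. Consequently it restricts to a bijection between the dot product set $\Pi(E,E)$ and the distance set $\cD(E,E)$, which yields the exact equality
\begin{equation*}
|\cD(E,E)| = |\Pi(E,E)|.
\end{equation*}

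With this reduction in hand the conclusion is immediate: assuming $E\subset S_1$ and $|E|\gg_C q^{d/2}$, Proposition \ref{Prosphere} gives $|\Pi(E,E)|\gg_c q$, and therefore $|\cD(E,E)|=|\Pi(E,E)|\gg_c q$, as desired.

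There is no genuine obstacle in this argument; the only points requiring care are the use of $\mathrm{char}\,\F_q>2$ to guarantee bijectivity of $t\mapsto 2-2t$, and the role of the hypothesis $d\geq 3$. The latter is needed only so that the hypothesis is not vacuous: since $|S_1|\sim q^{d-1}$, a subset $E\subset S_1$ can satisfy $|E|\gg_C q^{d/2}$ only when $q^{d/2}\lesssim q^{d-1}$, i.e. when $d\geq 3$ (for $d=2$ the sphere has only $\sim q$ points, so the condition $|E|\geq Cq$ with $C>1$ cannot be met). Thus the entire content of the statement is carried by Proposition \ref{Prosphere}, the dot product result on the unit sphere.
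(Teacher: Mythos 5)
Your proposal is correct and takes essentially the same route as the paper: the paper deduces this proposition (and its generalization, Theorem \ref{SphereDistance1}) from the sphere dot product result by the identity $\|x-y\|=i+j-2\,x\cdot y$, concluding $|\cD(E,F)|=|\Pi(E,F)|$ and then invoking the dot product bound, which with $i=j=1$ and $E=F$ is exactly your reduction to Proposition \ref{Prosphere}. The only difference is cosmetic: you make explicit the bijectivity of $t\mapsto 2-2t$ (using $\mathrm{char}\,\F_q>2$) and the vacuousness of the $d=2$ case, points the paper leaves implicit.
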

This proposition implies that the conclusion of Conjecture \ref{KohCon} holds for the dimensions $d\geq 3$ if the set $E$ is restricted to the unit sphere.\\


\subsection{Purpose of this paper}
For each $x\in \mathbb F_q^{d*},$ define
\begin{equation}\label{linedef}\l_x=\{sx\in \mathbb F_q^d:  s\in \mathbb F_q^*\},\end{equation}
where  we denote $\mathbb F_q^{d*}=\mathbb F_q^d\setminus \{(0,\ldots,0)\}$ for $d\geq 2,$ and 
$\mathbb F_q^*=\mathbb F_q\setminus \{0\}.$ Estimating $\max_{x\in \mathbb F_q^{d*}}|E\cap l_x|$ was  one  of the most important ingredients in proving Propositions \ref{ProAlex}, \ref{Prosphere}, and \ref{SphereDistance}. One of the purposes of this paper is to announce that  such an idea enables us to extend results of aforementioned propositions to the general dot product set $\Pi(E,F).$ In particular, we prove that  the conclusion of Proposition \ref{Prosphere} still holds in the case when the unit sphere $S_1$ is replaced by the paraboloid $P:=\{x\in \mathbb F_q^d: x_1^2+ \cdots+ x_{d-1}^2 = x_d\}.$ Furthermore, we observe that  if one of the sets $E,F$ is a Salem set,  then  we are able to obtain extremely good results on the generalized dot product set problem.\\

The other purpose of this paper is to introduce a new point of view in deriving the results on generalized dot product sets . Roughly speaking,  we relate the dot product problem to  estimation of the number of lines containing both the origin and an element in a set $E\setminus \{(0,\ldots,0)\} \subset \mathbb F_q^d.$ As a result, we improve statements of  aforementioned propositions for  general two sets $E, F\subset  \mathbb F_q^d.$ In addition,  we classify certain class of  the sets $E, F \subset \mathbb F_q^d$ which yield much better result than that of Proposition \ref{ProAlex}. For example, assuming that  the number of lines both passing through the origin and intersecting with $E\setminus \{(0,\ldots,0)\}$ (or $F\setminus\{(0,\ldots,0)\}$) is much greater than $|E|/q$ (or $|F|/q$),  we shall see that  the result of Proposition \ref{ProAlex} can be improved.

\section{ Preliminaries} 
Discrete Fourier analysis is considered as one of the most useful tools in studying problems in the finite field setting.
In this section, we briefly review it and derive lemmas which are essential in proving our results.

\subsection{Discrete Fourier analysis}
We shall denote by $\psi$ a  nontrivial additive character of $\mathbb F_q.$  All results in this paper are independent of the choice of the character $\psi.$
Recall that $\psi: \mathbb F_q \to \{u\in \mathbb C: |u|=1\}$ is a group homomorphism. 
The orthogonality relation of $\psi$ yields that
$$ \sum_{x\in \mathbb F_q^d} \psi(m\cdot x)=\left\{ \begin{array}{ll} 0  \quad&\mbox{if}~~ m\neq (0,\dots,0)\\
q^d  \quad &\mbox{if} ~~m= (0,\dots,0), \end{array}\right.$$
where  $m\cdot x$ denotes the usual dot-product notation.
Given a function $f: \mathbb F_q^d \rightarrow \mathbb C,$  the Fourier transform of the function $f$ is defined by
$$ \widehat{f}(m)= \frac{1}{q^d} \sum_{x\in \mathbb F_q^d} f(x) \psi(-x\cdot m)\quad \mbox{for}~~m\in \mathbb F_q^d.$$
Then  the Plancherel theorem in this content says that
$$ \sum_{m\in \mathbb F_q^d} |\widehat{f}(m)|^2 = \frac{1}{q^d} \sum_{x\in \mathbb F_q^d} |f(x)|^2.$$
Thus, it is clear that if $E \subset \mathbb F_q^d,$ then
$$ \sum_{m\in \mathbb F_q^d} |\widehat{E}(m)|^2 = \frac{|E|}{q^d}.$$
Here, throughout this paper, we identify the set $E \subset \mathbb F_q^d$ with the characteristic function on the set $E.$

\subsection{Key lemmas related to a general dot product set $\Pi(E,F)$}
Given $E, F\mathbb \subset F_q^d,$  a counting function $\nu$ on $\mathbb F_q$ is defined by
$$\nu(t)=|\{(x,y)\subset E\times F: x\cdot y=t\}|.$$ 
By the definition of the dot product set $\Pi(E,F),$ it is clear that
 $$ |E||F|=\sum_{t\in \Pi(E,F)} 1\times \nu(t).$$
Applying the Cauchy-Schwarz inequality, we see that
\begin{equation}\label{setformula} |\Pi(E,F)|\ge \frac{|E|^2|F|^2}{\sum_{t\in \mathbb F_q} \nu^2(t)}.\end{equation}
Following the argument in \cite{HIKR10}, we obtain the following formula.
\begin{lemma}\label{keylem1} Let $E, F\subset \mathbb F_q^d$ with $(0,\ldots,0)\notin E.$ Then we have
$$ |\Pi(E,F)|\gg_c  \min\left\{ q, ~ \frac{|E||F|^2}{ q^{2d-1}\sum\limits_{x\in \mathbb F_q^{d*}} \sum\limits_{s\in\mathbb  F_q^*}E(sx) |\widehat{F}(x)|^2}\right\},$$
where $\mathbb F_q^{d*} := \mathbb F_q^d \setminus \{(0,\ldots, 0)\}.$
\end{lemma}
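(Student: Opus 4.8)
The plan is to estimate the quadratic sum $\sum_{t\in\mathbb F_q}\nu^2(t)$ appearing in the Cauchy--Schwarz bound (\ref{setformula}). First I would open up the condition $x\cdot y = x'\cdot y'$ with the orthogonality relation for $\psi$, which gives
$$\sum_{t\in\mathbb F_q}\nu^2(t)=\frac{1}{q}\sum_{s\in\mathbb F_q}\left|\sum_{x,y\in\mathbb F_q^d}E(x)F(y)\,\psi(s\,x\cdot y)\right|^2.$$
Isolating the term $s=0$ produces the main term $|E|^2|F|^2/q$, and the remaining sum over $s\in\mathbb F_q^*$ is an error term $\mathcal E$ that must be controlled from above.

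The key algebraic step is to read the inner $y$-sum as a Fourier transform of $F$. Since $\sum_{y\in\mathbb F_q^d}F(y)\psi(s\,x\cdot y)=q^d\,\widehat F(-sx)$ by the definition of $\widehat F$, I would set $S(s):=\sum_{x,y}E(x)F(y)\psi(s\,x\cdot y)=q^d\sum_{x\in E}\widehat F(-sx)$ and apply the Cauchy--Schwarz inequality in the variable $x$ to obtain
$$|S(s)|^2\le q^{2d}\,|E|\sum_{x\in E}|\widehat F(-sx)|^2.$$
Here the hypothesis $(0,\dots,0)\notin E$ is used so that every $x\in E$ is nonzero, which is what makes the subsequent line parametrization legitimate.

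Next I would sum over $s\in\mathbb F_q^*$ and change variables. For a fixed $w\in\mathbb F_q^{d*}$, the pairs $(x,s)$ with $x\in E$, $s\neq 0$ and $-sx=w$ are parametrized by $x=-w/s$; replacing $s$ by $-1/s$ (a bijection of $\mathbb F_q^*$) turns the count of admissible $x\in E$ into $\sum_{s\in\mathbb F_q^*}E(sw)$. Therefore
$$\sum_{s\in\mathbb F_q^*}\sum_{x\in E}|\widehat F(-sx)|^2=\sum_{w\in\mathbb F_q^{d*}}\sum_{s\in\mathbb F_q^*}E(sw)\,|\widehat F(w)|^2,$$
and hence $\mathcal E=q^{-1}\sum_{s\neq 0}|S(s)|^2\le q^{2d-1}|E|\sum_{x\in\mathbb F_q^{d*}}\sum_{s\in\mathbb F_q^*}E(sx)\,|\widehat F(x)|^2$, which is precisely the quantity in the denominator of the statement.

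Finally, combining $\sum_{t}\nu^2(t)\le |E|^2|F|^2/q+\mathcal E$ with (\ref{setformula}) and splitting into the two cases according to whether the main term or $\mathcal E$ dominates yields $|\Pi(E,F)|\gg_c\min\{q,\ |E||F|^2/(q^{2d-1}\sum_{x,s}E(sx)|\widehat F(x)|^2)\}$ in either case. The step I expect to require the most care is the Cauchy--Schwarz application: it is exactly responsible for the extra factor $|E|$ in the denominator, and it is this maneuver that avoids the off-diagonal terms $x\neq x'$ that would otherwise plague a direct expansion of $|S(s)|^2$. The remaining delicate point is to verify the change of variables cleanly, keeping track of the role of $(0,\dots,0)\notin E$ in ensuring the correct line structure.
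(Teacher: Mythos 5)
Your proof is correct and follows essentially the same route as the paper: Cauchy--Schwarz in the $x$-variable to kill the off-diagonal terms $x\neq x'$, orthogonality of $\psi$ to isolate the main term $|E|^2|F|^2/q$, and the change of variables $(x,s)\mapsto (sx,\cdot)$ on $\mathbb F_q^{d*}\times\mathbb F_q^*$ (where $(0,\dots,0)\notin E$ is used exactly as you say) to arrive at the quantity ${\mathfrak B}(E,F)$, finishing via (\ref{setformula}). The only difference is the order of operations --- the paper applies Cauchy--Schwarz to $\nu^2(t)$ pointwise in $t$ and then invokes orthogonality, whereas you expand $\sum_{t}\nu^2(t)$ exactly by orthogonality first and then apply Cauchy--Schwarz to each $|S(s)|^2$ with $s\neq 0$ --- and both orderings yield the identical bound $\sum_{t\in\mathbb F_q}\nu^2(t)\leq q^{-1}|E|^2|F|^2+q^{2d-1}|E|\,{\mathfrak B}(E,F)$.
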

\begin{proof} Since $(0,\ldots,0) \notin E,$    it is enough by  (\ref{setformula}) to show that
$$ \sum_{t\in \mathbb F_q} \nu^2(t) \leq \frac{|E|^2|F|^2}{q} + q^{2d-1}|E| \sum_{x\in \mathbb F_q^d} \sum_{s\in \mathbb F_q^*} E(sx) |\widehat{F}(x)|^2.$$
By the Cauchy-Schwarz inequality, it follows that for each $t\in \mathbb F_q$, 
\begin{align*}\nu^2(t)&=\left(\sum_{x\in E} \sum_{y\in F: x\cdot y=t} 1\right)^2 \leq |E| \sum_{x\in E}\left(\sum_{y\in F: x\cdot y=t} 1\right)^2\\
&=|E| \sum_{x\cdot y=t=x\cdot y^\prime} E(x)F(y)F(y^\prime).\end{align*}
Summing over $t\in \mathbb F_q$ and using the orthogonality relation of $\psi$, it follows
\begin{align*}
\sum_{t\in \mathbb F_q} \nu^2(t)&\leq |E|q^{-1} \sum_{s\in \mathbb F_q} \sum_{x\in E, y, y^\prime\in F} \psi(sx\cdot (y-y^\prime))\\
&=q^{-1}|E|^2|F|^2 +  |E|q^{-1} \sum_{s\in \mathbb F_q^*} \sum_{x\in E, y, y^\prime\in F} \psi(sx\cdot (y-y^\prime)).
\end{align*}
By the definition of the Fourier transform and a change of variables, 
\begin{align*}
\sum_{t\in \mathbb F_q} \nu^2(t)&\leq q^{-1}|E|^2|F|^2 + q^{2d-1}|E| \sum_{x\in \mathbb F_q^d, s\in \mathbb F_q^*} E(x) |\widehat{F}(sx)|^2\\
&=q^{-1}|E|^2|F|^2 + q^{2d-1}|E| \sum_{x\in \mathbb F_q^d, s\in \mathbb F_q^*} E(sx) |\widehat{F}(x)|^2,\end{align*}
which completes the proof.
\end{proof}  

\begin{definition} For $E, F\subset \mathbb F_q^d,$ we define
\begin{align*}{\mathfrak B}(E,F)&= \sum_{x\in \mathbb F_q^{d*}} \sum_{s\in\mathbb  F_q^*}E(sx) |\widehat{F}(x)|^2\\
&= \sum_{x\in \mathbb F_q^{d*}} |E\cap l_x| |\widehat{F}(x)|^2, \end{align*}
where $l_x$ is defined as in (\ref{linedef}).
\end{definition} 
According to Lemma \ref{keylem1},  a lower bound of $|\Pi(E,F)|$ can be determined by an upper bound of ${\mathfrak B} (E,F).$  
 More precisely we have the following result.

\begin{lemma}\label{mainlem1} Let $E, F\subset \mathbb F_q^d.$ Assume that $\max_{x\in \mathbb F_q^{d*}} |E\cap l_x| \ll_C q^\beta$ for some $0\le \beta \le 1.$ Then if $|E||F|\gg_C q^{d+\beta}$, we have
$$|\Pi(E,F)|\gg_c q.$$
\end{lemma}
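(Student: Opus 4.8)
The plan is to combine the lower bound of Lemma~\ref{keylem1} with a crude upper bound for $\mathfrak{B}(E,F)$ coming from the hypotheses. Since Lemma~\ref{keylem1} is stated only for sets with $(0,\ldots,0)\notin E$, I would first remove the origin: as $(0,\ldots,0)\cdot y=0$ for every $y\in F$, passing from $E$ to $E\setminus\{(0,\ldots,0)\}$ alters $\Pi(E,F)$ by at most the single value $0$ and does not enlarge $|E|$ or any $|E\cap l_x|$. Thus it is enough to prove the estimate assuming $(0,\ldots,0)\notin E$, which I do henceforth.

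The key step is to bound $\mathfrak{B}(E,F)$ from above. Starting from
$$\mathfrak{B}(E,F)=\sum_{x\in \mathbb F_q^{d*}}|E\cap l_x|\,|\widehat{F}(x)|^2,$$
I would factor out the maximum of the line counts and invoke the hypothesis $\max_{x\in \mathbb F_q^{d*}}|E\cap l_x|\ll_C q^\beta$, which reduces matters to the sum $\sum_{x\in \mathbb F_q^{d*}}|\widehat{F}(x)|^2$. By the Plancherel identity recorded in the preliminaries this sum is at most $|F|/q^d$, and therefore
$$\mathfrak{B}(E,F)\ll_C \frac{q^\beta|F|}{q^d}.$$

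Substituting this into Lemma~\ref{keylem1}, the second entry of the minimum satisfies
$$\frac{|E||F|^2}{q^{2d-1}\,\mathfrak{B}(E,F)}\gg_c \frac{|E||F|}{q^{d-1+\beta}}.$$
Finally I would apply the size hypothesis $|E||F|\gg_C q^{d+\beta}$, which makes the right-hand side $\gg_C q$; choosing the implied large constant big enough, this quantity exceeds $q$, so the minimum in Lemma~\ref{keylem1} equals $q$ up to a multiplicative constant, giving $|\Pi(E,F)|\gg_c q$.

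I expect no genuine obstacle: the argument is just Lemma~\ref{keylem1} fed by the trivial factorization $\mathfrak{B}(E,F)\le(\max_x|E\cap l_x|)\sum_x|\widehat{F}(x)|^2$ together with Plancherel. The only places demanding care are the bookkeeping of the two large constants in the $\gg_C$ and $\ll_C$ conventions---one must ensure the constant in $|E||F|\gg_C q^{d+\beta}$ dominates the one produced by the line-count bound so that the second term of the minimum controls $q$---and the preliminary reduction to $(0,\ldots,0)\notin E$ needed to legitimately apply Lemma~\ref{keylem1}.
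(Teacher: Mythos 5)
Your proposal is correct and follows essentially the same route as the paper's own proof: reduce to $(0,\ldots,0)\notin E$, bound $\mathfrak{B}(E,F)\ll_C q^{\beta-d}|F|$ by factoring out $\max_x|E\cap l_x|$ and applying Plancherel, then feed this into Lemma~\ref{keylem1} and use $|E||F|\gg_C q^{d+\beta}$ to make the second term of the minimum exceed $q$. The only difference is that you spell out the removal-of-the-origin step, which the paper dispatches with ``without loss of generality.''
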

\begin{proof} Without a loss of generality, we may assume that $(0,\ldots,0)\notin E.$ Since $\max_{x\in \mathbb F_q^{d*}} |E\cap l_x| \ll_C q^\beta,$ it follows from the Plancherel theorem that
$$ {\mathfrak B}(E,F)\ll_C q^\beta \sum_{x\in \mathbb F_q^{d}} |\widehat{F}(x)|^2 =q^{\beta-d}|F|.$$
Combining this with Lemma \ref{keylem1}, we conclude that
 $$ |\Pi(E,F)|\gg_c \min \left\{ q, ~ \frac{|E||F|}{q^{d+\beta-1}} \right\},$$
which implies the statement of the lemma.
\end{proof}
\section{Results on the generalized dot product sets}

In this section, we first collect results on the generalized dot product set, which can be obtained by a direct application of Lemma \ref{mainlem1} or Lemma \ref{keylem1}.
For example,  we will be able to simply generalize the results of Propositions \ref{ProAlex}, \ref{Prosphere}, and \ref{SphereDistance}.
As a core part of this section, we derive a dot product result on subsets of the paraboloid, which may not be obtained by a direct application of Lemma \ref{mainlem1}.

\subsection{Direct consequences of Lemma \ref{mainlem1}}

The general version of Proposition \ref{ProAlex} is as follows. 

\begin{theorem}\label{ThmAlex}
 If $E, F \subset \mathbb F_q^d$ with $|E||F|\gg_C q^{d+1}$, then $|\Pi(E,F)|\gg_c q.$
\end{theorem}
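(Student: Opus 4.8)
The plan is to deduce this directly from Lemma \ref{mainlem1} by choosing the exponent $\beta=1$. The only hypothesis of that lemma that must be checked is the uniform bound $\max_{x\in \mathbb F_q^{d*}} |E\cap l_x| \ll_C q^\beta$, and I would establish it with $\beta=1$ for an \emph{arbitrary} set $E$ via a trivial counting argument: for each fixed $x\in \mathbb F_q^{d*}$ the map $s\mapsto sx$ from $\mathbb F_q^*$ into $\mathbb F_q^d$ is injective (because $x\neq (0,\ldots,0)$), so the punctured line $l_x=\{sx: s\in \mathbb F_q^*\}$ has exactly $q-1$ elements. Consequently $|E\cap l_x|\le q-1\le q$ for every $x\in \mathbb F_q^{d*}$, which is precisely the required bound with $\beta=1$.

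With $\beta=1$ in hand, the hypothesis $|E||F|\gg_C q^{d+1}$ coincides with the condition $|E||F|\gg_C q^{d+\beta}$ of Lemma \ref{mainlem1}, so the conclusion $|\Pi(E,F)|\gg_c q$ follows at once. In effect, Theorem \ref{ThmAlex} is the instance of Lemma \ref{mainlem1} in which one invokes nothing beyond the trivial cardinality of a line through the origin, and the Fourier-analytic content is already packaged inside Lemma \ref{keylem1} and the Plancherel estimate for $\mathfrak B(E,F)$ used to prove Lemma \ref{mainlem1}.

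There is essentially no analytic obstacle at this stage; the single step that does any work is the observation $|l_x|=q-1$, and it is immediate. The substance of the theorem therefore rests entirely on the already-established lemmas. What I would emphasize is that the exponent $d+1$ produced this way is exactly the two-set analogue of the exponent $(d+1)/2$ in Proposition \ref{ProAlex}, since setting $E=F$ recovers $|E|^2\gg_C q^{d+1}$, i.e. $|E|\gg_C q^{(d+1)/2}$. The genuinely hard part—the actual goal of the paper—is to \emph{improve} this exponent, which requires replacing the trivial bound $\beta=1$ by a sharper estimate on $\max_{x}|E\cap l_x|$ (or by exploiting structural information, such as $E,F$ lying on a paraboloid or one of them being a Salem set); that is where Lemma \ref{keylem1}, rather than the convenient but lossy Lemma \ref{mainlem1}, will have to be applied more carefully.
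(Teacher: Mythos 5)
Your proposal is correct and takes essentially the same route as the paper: the paper's proof of Theorem \ref{ThmAlex} is exactly the application of Lemma \ref{mainlem1} with $\beta=1$, justified by the trivial bound $\max_{x\in \mathbb F_q^{d*}}|E\cap l_x|\leq q$. Your counting is in fact slightly more careful than the paper's (which says each line has ``exactly $q$ points,'' whereas the punctured line $l_x=\{sx: s\in\mathbb F_q^*\}$ has $q-1$ elements), but this makes no difference to the estimate.
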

\begin{proof}
Since every line  contains exactly $q$ points, it is clear that 
$$\max_{x\in \mathbb F_q^{d*}} |E\cap l_x| \leq q.$$
Thus,  the result follows immediately by using Lemma \ref{mainlem1} with $\beta=1.$
\end{proof}

The following theorem is a generalization of Proposition \ref{Prosphere}.
\begin{theorem}\label{Thmsphere}
Let $F\subset \mathbb F_q^d$ and  $E\subset S_j:=\{x\in \mathbb F_q^d: x_1^2+ \cdots+x_d^2=j\in \mathbb F_q^*\}$. Then if 
 $|E||F|\gg_C q^{d},$ we have  $|\Pi(E,F)|\gg_c q.$
\end{theorem}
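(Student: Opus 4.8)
The plan is to invoke Lemma \ref{mainlem1} with the exponent $\beta = 0$, so that the hypothesis $|E||F| \gg_C q^{d+\beta} = q^d$ matches the stated hypothesis exactly. To apply the lemma, the key step is to verify that
$$\max_{x\in \mathbb F_q^{d*}} |E\cap l_x| \ll_C q^0 = 1,$$
that is, that each line $l_x$ through the origin meets $E$ in a bounded number of points. This is where the sphere condition $E\subset S_j$ enters: if two distinct nonzero points $sx$ and $s'x$ of the line $l_x$ both lie in $S_j$, then $\|sx\| = s^2\|x\| = j$ and $\|s'x\| = (s')^2\|x\| = j$, which forces $s^2 = (s')^2$ (using $j\neq 0$, hence $\|x\|\neq 0$), and therefore $s' = \pm s$. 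Thus at most two points of any line $l_x$ can belong to the sphere $S_j$, giving $\max_{x} |E\cap l_x| \le 2 \ll_C 1$.

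First I would record the elementary geometric observation above. Since $j\in\mathbb F_q^*$ and the characteristic is greater than two, the equation $s^2 = (s')^2$ has exactly the two solutions $s' = \pm s$ in $\mathbb F_q$, and these are genuinely distinct because the characteristic is odd. Hence the intersection of $E$ (a subset of $S_j$) with any line $l_x$ has cardinality at most $2$, uniformly in $x\in\mathbb F_q^{d*}$. Then I would simply quote Lemma \ref{mainlem1} with $\beta = 0$: the bound $\max_{x} |E\cap l_x| \ll_C 1$ is precisely the hypothesis of that lemma for $\beta = 0$, and the conclusion $|\Pi(E,F)|\gg_c q$ follows at once from the displayed inequality $|\Pi(E,F)|\gg_c \min\{q, |E||F|/q^{d-1}\}$ once $|E||F|\gg_C q^d$.

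I do not anticipate a serious obstacle here, since the argument is a clean reduction to Lemma \ref{mainlem1}. The only point requiring a small amount of care is the constant in $\max_{x}|E\cap l_x|\le 2$: one must be certain that the characteristic being odd is exactly what guarantees $s$ and $-s$ are distinct (so the bound is $2$ rather than something spurious), and that $j\neq 0$ is what makes $\|x\|\neq 0$ so that $s^2$ is determined by $j/\|x\|$. Both are already built into the standing assumptions of the paper, so the verification is immediate and the theorem reduces to a direct invocation of the key lemma.
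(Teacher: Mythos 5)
Your proposal is correct and is essentially identical to the paper's own proof: both verify that $j\neq 0$ forces $\max_{x\in \mathbb F_q^{d*}}|E\cap l_x|\le 2$ (you spell out the $s^2=(s')^2$ computation that the paper leaves implicit) and then invoke Lemma \ref{mainlem1} with $\beta=0$. No gaps; the extra care you take about $\|x\|\neq 0$ and the odd characteristic is sound and consistent with the paper's standing assumptions.
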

\begin{proof} Since $j\neq 0,$ it follows that 
$$ \max_{x\in \mathbb F_q^{d*}} |E\cap l_x| \le 2.$$
Therefore, the statement of the theorem follows by applying Lemma \ref{mainlem1} with $\beta=0.$
\end{proof}

We now give the generalization of Proposition \ref{SphereDistance} .
\begin{theorem}\label{SphereDistance1} Let $S_j=\{x\in \mathbb F_q^d: x_1^2+x_2^2+ \cdots + x_d^2=j\}.$  Suppose that $E\subset S_i $ and $F\subset S_j$ for some $i, j \in \mathbb F_q^*.$
Then if $d\geq 3,$ and $|E||F|\gg_C q^{d},$ we have  $|\cD(E,F)|\gg_c q.$
\end{theorem}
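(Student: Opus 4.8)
The plan is to reduce the statement to Theorem \ref{Thmsphere} by exploiting the algebraic relation between distances and dot products that holds when both sets lie on spheres. First I would record the elementary identity coming from the definition of $\|\cdot\|$: for any $x,y\in \mathbb F_q^d$,
$$\|x-y\| = \|x\| - 2\,(x\cdot y) + \|y\|.$$
In particular, if $x\in E\subset S_i$ and $y\in F\subset S_j$, then $\|x\|=i$ and $\|y\|=j$, so that
$$\|x-y\| = i+j - 2\,(x\cdot y).$$
Thus each value of the distance set $\cD(E,F)$ is obtained from a value of the dot product set $\Pi(E,F)$ by applying the affine map $t\mapsto i+j-2t$, and conversely.

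Next I would observe that, since the characteristic of $\mathbb F_q$ is greater than two, the element $2$ is invertible, and therefore the map $\phi:\mathbb F_q\to\mathbb F_q$ defined by $\phi(t)=i+j-2t$ is a bijection. Consequently $\cD(E,F)=\phi\bigl(\Pi(E,F)\bigr)$, and the injectivity of $\phi$ yields the exact equality
$$|\cD(E,F)| = |\Pi(E,F)|.$$

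Finally, I would invoke Theorem \ref{Thmsphere}. Since $E\subset S_i$ with $i\in\mathbb F_q^*$ and $F\subset S_j\subset \mathbb F_q^d$, the hypotheses of that theorem are satisfied (its set on the sphere is my $E$, while its unconstrained set is my $F$). Hence the assumption $|E||F|\gg_C q^{d}$ gives $|\Pi(E,F)|\gg_c q$. Combining this with the equality from the previous step, $|\cD(E,F)|=|\Pi(E,F)|\gg_c q$, which is the desired conclusion.

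There is no serious obstacle here: the entire content of the theorem is carried by the dot product estimate of Theorem \ref{Thmsphere}, and the reduction is purely the observation that, on spheres, the distance is an invertible affine function of the dot product. The only features of the hypotheses that are actually used are the invertibility of $2$, guaranteed by the standing assumption that $\mathrm{char}(\mathbb F_q)>2$, and the nonvanishing of $i$, needed so that $E$ genuinely lies on a sphere $S_i$ with $i\neq 0$ and Theorem \ref{Thmsphere} applies. The restriction $d\geq 3$ plays no role in the reduction itself; it serves only to keep the hypothesis $|E||F|\gg_C q^{d}$ from being vacuous, since for $d=2$ a sphere $S_j$ carries only $O(q)$ points and the product $|E||F|$ cannot exceed a constant multiple of $q^{2}=q^{d}$.
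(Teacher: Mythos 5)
Your proposal is correct and follows essentially the same route as the paper: the identity $\|x-y\|=i+j-2\,(x\cdot y)$ on the spheres, the bijectivity of $t\mapsto i+j-2t$ (using $\mathrm{char}(\mathbb F_q)>2$) to get $|\cD(E,F)|=|\Pi(E,F)|$, and then Theorem \ref{Thmsphere}. Your added observations, that the bijection requires invertibility of $2$ and that $d\geq 3$ only prevents the hypothesis from being vacuous, are accurate refinements of the paper's terser argument.
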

\begin{proof}  Notice that if $x\in E \subset S_i$ and $y\in F \subset S_j$, then
$$\|x-y\|=x\cdot x -2x\cdot y + y\cdot y=i+j-2x\cdot y.$$
Thus, $|D(E,F)|=|\Pi(E,F)|.$ It therefore suffices to prove that $|\Pi(E,F)|\gg_c q$ as long as $|E||F|\gg_C q^d.$ However, this follows immediately from Theorem \ref{Thmsphere}.
\end{proof}

Now we address a result on the dot product set $\Pi(E,F)$ in the case when one of sets $E, F\subset \mathbb F_q^d$ is a Salem set.
Recall that a set $F\subset \mathbb F_q^d$ is a Salem set if $ |\widehat{F}(m)|\ll_C q^{-d}\sqrt{|F|}$ for all $m\neq (0,\ldots,0).$
\begin{theorem} Let $E,F\subset \mathbb F_q^d.$ If $F$ is a Salem set and $|F|\gg_C q,$ then 
$$|\Pi(E,F)|\gg_c q.$$ \end{theorem}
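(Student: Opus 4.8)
The plan is to apply Lemma~\ref{keylem1} directly and exploit the Salem hypothesis on $F$ to control the crucial sum ${\mathfrak B}(E,F)$. First I would note that, as in the proof of Lemma~\ref{mainlem1}, we may assume $(0,\ldots,0)\notin E$ without loss of generality, so that Lemma~\ref{keylem1} applies and it suffices to bound
$$ {\mathfrak B}(E,F)=\sum_{x\in \mathbb F_q^{d*}} |E\cap l_x|\,|\widehat{F}(x)|^2$$
from above. The key difference from Lemma~\ref{mainlem1} is that instead of bounding $|E\cap l_x|$ pointwise, I would bound the Fourier side using the Salem property $|\widehat{F}(x)|\ll_C q^{-d}\sqrt{|F|}$, valid for all $x\neq(0,\ldots,0)$.

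The main step is then a straightforward estimate: pulling out the uniform bound on $|\widehat{F}(x)|^2$ gives
$$ {\mathfrak B}(E,F)\ll_C q^{-2d}|F|\sum_{x\in \mathbb F_q^{d*}} |E\cap l_x|.$$
The remaining sum counts, for each $x$, the number of points of $E$ on the line $l_x$ through the origin, summed over all nonzero $x$; since each nonzero point $z\in E$ lies on the lines $l_x$ for exactly the $q-1$ nonzero scalar multiples $x$ of $z$, one has $\sum_{x\in \mathbb F_q^{d*}} |E\cap l_x| = (q-1)|E| \leq q|E|$. Hence
$$ {\mathfrak B}(E,F)\ll_C q^{-2d}|F|\cdot q|E| = q^{1-2d}|E||F|.$$

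Finally I would substitute this bound into Lemma~\ref{keylem1}. This yields
$$ |\Pi(E,F)|\gg_c \min\left\{q,\ \frac{|E||F|^2}{q^{2d-1}\cdot q^{1-2d}|E||F|}\right\}=\min\left\{q,\ |F|\right\}.$$
Since the hypothesis $|F|\gg_C q$ guarantees $|F|\geq q$ up to the constant, the minimum is comparable to $q$, giving $|\Pi(E,F)|\gg_c q$ as desired. I do not anticipate a serious obstacle here: the argument is a clean application of Lemma~\ref{keylem1} combined with the defining Salem estimate, and the only point requiring minor care is the bookkeeping of constants in the final $\min$, namely verifying that the second term dominates $q$ precisely when $|F|\gg_C q$. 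It is worth emphasizing that this result is notably stronger than the generic bound of Theorem~\ref{ThmAlex}, since it requires only a lower bound on $|F|$ alone, with no size restriction on $E$ beyond nonemptiness.
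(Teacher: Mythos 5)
Your proposal is correct and follows essentially the same route as the paper's own proof: assume $(0,\ldots,0)\notin E$, pull out the uniform Salem bound $|\widehat{F}(x)|^2\ll_C q^{-2d}|F|$ to get ${\mathfrak B}(E,F)\ll_C q^{1-2d}|E||F|$, and feed this into Lemma~\ref{keylem1} to obtain $|\Pi(E,F)|\gg_c \min\{q,|F|\}$. Your explicit count $\sum_{x\in \mathbb F_q^{d*}}|E\cap l_x|=(q-1)|E|$ is just a slightly more precise version of the paper's bound $\sum_{x}\sum_{s}E(sx)<q|E|$, so there is no substantive difference.
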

\begin{proof} Notice that we may assume that $(0,\ldots, 0)\notin E.$ Since $F$ is a Salem set, we see that 
$$\max_{x\in \mathbb F_q^{d*}}|\widehat{F}(x)|^2 \ll_C q^{-2d} |F|.$$
It therefore follows that
$$ {\mathfrak B}(E,F) \ll_C q^{-2d} |F|  \sum_{x\in \mathbb F_q^{d*}} \sum_{s\in\mathbb  F_q^*}E(sx)<q^{-2d} |F| |E|q =q^{1-2d} |E||F|.$$
By this and Lemma \ref{keylem1}, we obtain that
$$  |\Pi(E,F)|\gg_c  \min\{ q, ~|F|\},$$
which implies the statement of the theorem.
\end{proof}

\subsection{Dot product sets determined by subsets of the  paraboloid}
In the finite field setting, the paraboloid in $\mathbb F_q^d$, denoted by $P$, is defined by
$$P=\{x\in \mathbb F_q^d: x_1^2+\cdots+ x_{d-1}^2=x_d\},$$
which is an analog of the Euclidean paraboloid. \\

Unlike the sphere $S_j$ with nonzero radius, the paraboloid $P\subset \mathbb F_q^d, d\geq 3,$ contains lines through the origin.
For example,  the set $H:=\{x\in P: x_d=0\}$ consists of some of lines through the origin.
Thus, if $E\subset P$ contains some of such lines, then $\max_{x\in \mathbb F_q^{d*}} |E\cap l_x| =q-1.$
In this case, if we simply use Lemma \ref{mainlem1} , then we only get that if $|E| |F|\gg_C q^{d+1},$ then $|\Pi(E,F)|\gg_c q.$
This result is much weaker than the dot product result on spheres with nonzero radius, but there are no known good results for sets in the paraboloid.
In this subsection, we prove that if $E, F \subset P$ and $|E||F|\gg_C q^d,$ then $|\Pi(E,F)|\gg_c q.$
We begin with a definition.
Let $\pi:\mathbb F_q^d \to \mathbb F_q^{d-1}$ be a projection map defined as
$$\pi(x)=(x_1, \ldots, x_{d-1})\quad\mbox{for}~~x=(x_1,\ldots, x_{d-1}, x_d).$$
We have the following result.
\begin{lemma} \label{caselem} Let $E\subset P$ and $F\subset \mathbb F_q^d.$
If $|E||\pi(F)|\gg_C q^d,$ then  $|\Pi(E,F)|\gg_c q.$
\end{lemma}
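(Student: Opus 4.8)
The plan is to apply Lemma~\ref{keylem1}, after first reducing to the case in which $\pi$ is injective on $F$; the geometry of lines on the paraboloid then controls $\mathfrak B(E,F)$. For the reduction I would replace $F$ by a subset $F'\subseteq F$ containing exactly one point of each nonempty fibre $F\cap\pi^{-1}(w)$, $w\in\pi(F)$. Then $\pi|_{F'}$ is injective, $|F'|=|\pi(F')|=|\pi(F)|$, and $\Pi(E,F')\subseteq\Pi(E,F)$, so $|\Pi(E,F)|\ge|\Pi(E,F')|$. Since $|E||\pi(F')|=|E||\pi(F)|\gg_C q^d$, it suffices to treat the case where $\pi|_F$ is injective; there the hypothesis becomes simply $|E||F|\gg_C q^d$, and $F$ meets each fibre of $\pi$ at most once.

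\emph{From Lemma~\ref{keylem1} to a bound on $\mathfrak B$.} We may assume $(0,\dots,0)\notin E$. By Lemma~\ref{keylem1} it is enough to show $\frac{|E||F|^2}{q^{2d-1}\mathfrak B(E,F)}\gg_c q$, i.e. $\mathfrak B(E,F)\ll_C |E||F|^2/q^{2d}$. To estimate $\mathfrak B(E,F)=\sum_{x\in\mathbb F_q^{d*}}|E\cap l_x||\widehat F(x)|^2$ I would use the geometric fact that a punctured line $l_x$ meets $P$ in at most one point unless $x$ is a nonzero point of $H=\{x\in P:x_d=0\}$: writing $sx\in P$ with $s\neq0$ gives $s(x_1^2+\cdots+x_{d-1}^2)=x_d$, which pins down a unique $s$ as soon as $x_1^2+\cdots+x_{d-1}^2\neq0$, and is satisfied for all $s$ only when $x\in H$. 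Hence $|E\cap l_x|\le1$ for $x\notin H$ while $|E\cap l_x|\le q$ for $x\in H\setminus\{0\}$, so that
$$\mathfrak B(E,F)\le\sum_{x\in\mathbb F_q^{d}}|\widehat F(x)|^2+q\sum_{x\in H\setminus\{0\}}|\widehat F(x)|^2=\frac{|F|}{q^d}+q\sum_{x\in H\setminus\{0\}}|\widehat F(x)|^2,$$
the first term being evaluated by the Plancherel theorem.

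\emph{The cone term.} Each $x\in H$ has the form $x=(x',0)$, and because $\pi|_F$ is injective one has $\widehat F(x',0)=q^{-d}\sum_{w\in\pi(F)}\psi(-x'\cdot w)=\tfrac1q\,\widehat{\pi(F)}(x')$, where on the right $\pi(F)$ is viewed as a subset of $\mathbb F_q^{d-1}$ and the Fourier transform is taken there. Applying Plancherel in dimension $d-1$ then gives
$$q\sum_{x\in H\setminus\{0\}}|\widehat F(x)|^2\le q\sum_{x'\in\mathbb F_q^{d-1}}\frac1{q^2}|\widehat{\pi(F)}(x')|^2=\frac1q\cdot\frac{|\pi(F)|}{q^{d-1}}=\frac{|F|}{q^d}.$$
Thus $\mathfrak B(E,F)\le 2|F|/q^d$, and since $|E||F|\gg_C q^d$ we obtain $\frac{|E||F|^2}{q^{2d-1}\mathfrak B(E,F)}\ge\frac{|E||F|}{2q^{d-1}}\gg_c q$, so Lemma~\ref{keylem1} yields $|\Pi(E,F)|\gg_c q$.

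\emph{Main obstacle.} The only delicate point is the cone term: the lines lying entirely in $P$ (the directions in $H$) can meet $E$ in up to $q-1$ points, which is exactly what makes a direct appeal to Lemma~\ref{mainlem1} lose a factor of $q$ and produce the weaker threshold $q^{d+1}$. The injectivity reduction is what recovers this factor, since it forces $\sum_{x'}|\widehat{\pi(F)}(x')|^2=|\pi(F)|/q^{d-1}$ rather than the larger value that a concentrated fibre distribution of $F$ would allow; this single saving is what upgrades the exponent from $d+1$ to $d$.
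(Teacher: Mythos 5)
Your proof is correct, and it takes a genuinely different route from the paper's. The paper decomposes $E$ rather than $F$: it writes $E=G\cup B$ with $G=\{x\in E:x_d\neq 0\}$ and $B=\{x\in E:x_d=0\}$, and argues by cases. If $|G|\ge |E|/2$, it uses $|G\cap l_x|\le 1$ and quotes Lemma \ref{mainlem1} with $\beta=0$; if $|B|\ge |E|/2$, it observes that $\Pi(B,F)=\Pi(\pi(B),\pi(F))$ and quotes Theorem \ref{ThmAlex} in dimension $d-1$, using $|\pi(B)|=|B|\ge |E|/2$ --- this second case is exactly why the hypothesis is phrased with $|\pi(F)|$. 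You instead keep $E$ whole, prune $F$ to a fibre transversal $F'$ so that $\pi|_{F'}$ is injective, and make a single application of Lemma \ref{keylem1}, splitting ${\mathfrak B}(E,F')$ over the cone directions $H\setminus\{(0,\ldots,0)\}$, where $H=\{x\in P:x_d=0\}$, and the remaining directions; the identity $\widehat{F'}(x',0)=q^{-1}\widehat{\pi(F')}(x')$ together with Plancherel in $\mathbb F_q^{d-1}$ controls the cone term, and Plancherel in $\mathbb F_q^d$ controls the rest. In effect you re-prove, inside the estimate of ${\mathfrak B}$, the $(d-1)$-dimensional input that the paper imports as a black box (Theorem \ref{ThmAlex} being itself just Lemma \ref{keylem1} plus the trivial bound $|E\cap l_x|\le q$). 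The paper's route is shorter given its existing toolkit and amounts to an implicit induction on dimension; your route avoids the case split, is self-contained modulo Lemma \ref{keylem1}, and isolates precisely where the dangerous factor of $q$ coming from lines inside $P$ is recovered: the transversal reduction caps $\sum_{x'}|\widehat{\pi(F')}(x')|^2$ at $|\pi(F)|/q^{d-1}$, which is the single saving that lowers the threshold from $q^{d+1}$ to $q^d$. All of your individual steps (the dichotomy for $|P\cap l_x|$, the hyperplane Fourier identity, the two Plancherel applications, and the harmless assumption $(0,\ldots,0)\notin E$) check out.
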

\begin{proof}Write that
$E=G\cup B$ where
$$G=\{x\in E: x_d\neq 0\}\quad\mbox{and}\quad B=\{x\in E: x_d=0\}.$$
We may assume that either $|G|\geq |E|/2$ or $|B|\geq |E|/2.$\\

{\bf Case 1.} Assume that $|G|\geq |E|/2.$
Since $G\subset P$, it is not hard to see that $|G\cap l_x|\leq 1 $ for all $x\in \mathbb F_q^{d*}.$
By Lemma \ref{mainlem1}, we see that 
if $|G||F|\gg_C q^d,$ then $|\Pi(G, F)|\gg_c q.$
Since $2|G||F|\geq |E| |\pi(F)|\gg_C q^d,$ and $|\Pi(E,F)| \geq |\Pi(G,F)|,$  the statement of the lemma follows.\\

{\bf Case 2.} Assume that $|B|\geq |E|/2.$ By the definitions of $B$ and the dot product, notice that
\begin{align*}\Pi(B, F)&=\Pi(B, \pi(F)\times \{0\})\\
                                &=\Pi(\pi(B), \pi(F)):=\{\alpha\cdot \beta\in \mathbb F_q: \alpha\in \pi(B)\subset \mathbb F_q^{d-1},  ~\beta\in \pi(F) \subset \mathbb F_q^{d-1} \}\end{align*}
Since $\pi(B), \pi(F) \subset \mathbb F_q^{d-1},$  we can use Theorem \ref{ThmAlex} for dimension $d-1$ to deduce that
$$ |\Pi(B,F)|=|\Pi( \pi(B), \pi(F))| \gg_c q \quad\mbox{if}~~ |\pi(B)||\pi(F)|\gg_C q^d.$$
Since $B$ is a subset of the paraboloid $P,$  it is clear that $|\pi(B)|=|B|\geq |E|/2,$ 
where the inequality follows by our case assumption. Since $|\Pi(E,F)|\geq |\Pi(B,F)|,$ we complete the proof.
\end{proof}

Since $|\Pi(F)|=|F| $ for $F\subset P,$ the following result follows immediately from Lemma \ref{caselem}.
\begin{theorem} Let $E, F \subset P\subset \mathbb F_q^d.$
If $|E||F|\gg_C q^d,$ then $|\Pi(E,F)|\gg_c q.$
\end{theorem}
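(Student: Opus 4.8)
The plan is to deduce this statement as an immediate corollary of Lemma \ref{caselem}, so the only work is to translate the hypothesis $|E||F|\gg_C q^d$ into the form required there. The key observation is that the projection $\pi$ is \emph{injective} on the paraboloid $P$: if $x\in P$, then its last coordinate is forced by $x_d=x_1^2+\cdots+x_{d-1}^2$, so $x$ is completely determined by $\pi(x)=(x_1,\ldots,x_{d-1})$. In particular $\pi$ restricts to a bijection from $P$ onto $\mathbb F_q^{d-1}$, and hence $|\pi(F)|=|F|$ for every $F\subset P$.

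With this identity in hand I would argue as follows. Since $F\subset P$, the hypothesis $|E||F|\gg_C q^d$ is literally the same as $|E||\pi(F)|\gg_C q^d$. As $E\subset P$ and $F\subset\mathbb F_q^d$, Lemma \ref{caselem} applies verbatim and yields $|\Pi(E,F)|\gg_c q$, which is exactly the claim.

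The real difficulty is not in this final deduction but is already resolved inside Lemma \ref{caselem}. The obstacle specific to the paraboloid is that, unlike a sphere of nonzero radius, $P$ contains the lines through the origin lying in $H=\{x\in P: x_d=0\}$; if $E$ contains such a line then $\max_{x\in\mathbb F_q^{d*}}|E\cap l_x|=q-1$, so a direct application of Lemma \ref{mainlem1} only gives the weak threshold $q^{d+1}$. Lemma \ref{caselem} circumvents this by splitting $E=G\cup B$ into the part $G$ with $x_d\neq 0$ (where every line meets $E$ in at most one point, so Lemma \ref{mainlem1} with $\beta=0$ applies) and the part $B\subset H$ (where the dot product problem collapses to a genuine $(d-1)$-dimensional dot product problem handled by Theorem \ref{ThmAlex}); in either case the bijectivity of $\pi$ on $P$ is what keeps the relevant cardinalities comparable to $|E|$ and $|F|$. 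Thus the only genuinely new ingredient needed for the theorem itself is the injectivity of $\pi|_P$, and everything else is bookkeeping.
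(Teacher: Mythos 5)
Your proposal is correct and is precisely the paper's own argument: the paper deduces the theorem from Lemma \ref{caselem} via the observation that $|\pi(F)|=|F|$ for $F\subset P$ (written in the paper with the typo $|\Pi(F)|=|F|$), which is exactly the injectivity of $\pi|_P$ you spell out. Your write-up is in fact slightly more careful than the paper's one-line deduction, but the route is identical.
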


\begin{remark}\label{Rem1} Let $E\subset P\subset \mathbb F_q^d,$ and $F\subset \mathbb F_q^d$ with $|\pi(F)|\gg_c |F|/q^\gamma$ for some $0\leq \gamma \leq 1.$
In this case, Lemma \ref{caselem} implies that if $|E||F|\gg_C q^{d+\gamma},$ then $|\Pi(E,F)|\gg_c q.$ \end{remark}
Here we may have a natural  question. 
\begin{question}\label{question1}
 Let $E\subset P$ and $F\subset \mathbb F_q^d.$ Then is it true that $|\Pi(E,F)|\gg_c q $ as long as $|E||F|\gg_c q^d?$\end{question}

Considering Remark \ref{Rem1}, it seems that the answer is negative.
However, Theorem \ref{Thmsphere} says that if we replace the paraboloid $P$ by the sphere $S_j$ with nonzero radius, then the answer is positive.
Now, we show that if the paraboloid $P$ is appropriately translated, then the answer to Question \ref{question1} is also positive.
\begin{theorem} \label{Pt}Let $a \in\mathbb F_q^d \setminus \overline{P}:=\{x\in \mathbb F_q^d: x_1^2+\cdots+x_{d-1}^2=-x_d\}.$  Suppose that $E\subset P+a:=\{x+a: x\in P\}$ and $F\subset \mathbb F_q^d.$
Then if $|E||F|\gg_C q^d,$ we have $|\Pi(E,F)|\gg_c q.$\end{theorem}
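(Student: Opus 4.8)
The plan is to reduce this to Lemma \ref{mainlem1} with $\beta=0$ by proving the geometric fact that every line through the origin meets the translated paraboloid $P+a$ in at most two points; the hypothesis $a\notin\overline{P}$ is precisely the condition that guarantees this.

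First I would fix a nonzero direction $v\in\mathbb F_q^{d*}$ and count the parameters $s$ for which $sv\in P+a$, equivalently $sv-a\in P$. Writing out the defining equation $\sum_{i=1}^{d-1}(sv_i-a_i)^2=sv_d-a_d$ and expanding, this becomes a polynomial equation in $s$ of degree at most two:
\[
\Big(\sum_{i=1}^{d-1}v_i^2\Big)s^2-\Big(2\sum_{i=1}^{d-1}v_ia_i+v_d\Big)s+\Big(\sum_{i=1}^{d-1}a_i^2+a_d\Big)=0.
\]
The crucial point is that the constant term equals $\sum_{i=1}^{d-1}a_i^2+a_d$, which is nonzero \emph{exactly because} $a\notin\overline{P}.$ Hence this polynomial in $s$ is never identically zero, so it has at most two roots regardless of whether the leading coefficient $\sum_{i=1}^{d-1}v_i^2$ vanishes. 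Consequently $|(P+a)\cap l_v|\le 2$ for every $v\in\mathbb F_q^{d*}$, and since $E\subset P+a$, we obtain $\max_{v\in\mathbb F_q^{d*}}|E\cap l_v|\le 2.$

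With this bound in hand, I would simply invoke Lemma \ref{mainlem1} with $\beta=0$: the hypothesis $\max_{v}|E\cap l_v|\ll_C q^{0}$ holds, so $|E||F|\gg_C q^{d}$ immediately forces $|\Pi(E,F)|\gg_c q,$ which is exactly the claim.

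The main obstacle—really the only substantive step—is recognizing why the translate matters. For the untranslated paraboloid $P,$ lines through the origin such as those in $H=\{x\in P:x_d=0\}$ lie entirely on $P,$ pushing $\max_v|E\cap l_v|$ up to $q-1$ and thereby ruining the $\beta=0$ bound; this is precisely the degenerate case in which the intersection polynomial above vanishes identically. The condition $a\notin\overline{P}$ guarantees the constant term survives, so no line through the origin can be even partially swallowed by $P+a,$ and the problem collapses to the clean $\beta=0$ situation handled by Lemma \ref{mainlem1}.
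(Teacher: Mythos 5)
Your proposal is correct and follows essentially the same route as the paper's own proof: invoke Lemma \ref{mainlem1} with $\beta=0$ after showing $|(P+a)\cap l_v|\le 2$ via the quadratic equation in $s$, whose constant term $\sum_{i=1}^{d-1}a_i^2+a_d$ is nonzero precisely when $a\notin\overline{P}$. The only difference is that you spell out the ``routine algebra'' the paper omits, including the degenerate case where the leading coefficient $\sum_{i=1}^{d-1}v_i^2$ vanishes, which is a welcome addition rather than a deviation.
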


\begin{proof} By Lemma \ref{mainlem1}, it suffices to prove that for every $a\in \mathbb F_q^d\setminus \overline{P},$ and $x\neq (0,\ldots,0),$
\begin{equation}\label{equ2} | (P+a)\cap l_x|\ll_C 1,\end{equation}
where we recall that $\l_x=\{s x\in \mathbb F_q^{d*}: s\in \mathbb F_q^*\}.$
Fix $x\in P+a.$ 
Then it follows that
$$ (x_1-a_1)^2 + \cdots+ (x_{d-1}-a_{d-1})^2=x_d -a_d.$$
With this assumption, it is enough to prove that
$$|\{s\in \mathbb F_q^*: sx\in P+a\}|\leq 2.$$
 It follows from a routine algebra  that if $a\notin \overline{P}$( namely, $a_1^2+\cdots+a_{d-1}^2\ne -a_d$), then
$$ |\{s\in \mathbb F_q^*: (sx_1-a_1)^2+\cdots+ (sx_{d-1}-a_{d-1})^2= sx_d -a_d\}| \leq 2.$$
Thus, the proof is complete.
\end{proof}

Observe that $(0, \ldots, 0) \in P,$ but the sphere $S_j$ for $j\ne 0$ or $P+a$ for $a\notin P$ does not contain $(0,\ldots,0).$
From Theorem \ref{Thmsphere} and Theorem \ref{Pt}, this observation may lead us to the following conjecture.
\begin{conjecture} 
Let  $ V=\{x\in \mathbb F_q^d: Q(x)=0\}$ be a variety where  $Q(x) \in \mathbb F_q[x_1,\ldots,x_d]$ is a polynomial.
In addition, assume that $(0,\ldots, 0) \notin V.$
If $E\subset V$ and $F\subset \mathbb F_q^d $ with $|E||F|\gg_C q^d,$ then we have
$$|\Pi(E,F)|\gg_c q.$$
\end{conjecture}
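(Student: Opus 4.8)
The plan is to reduce everything to Lemma \ref{mainlem1}, whose only input is a bound on $\max_{x\in\mathbb F_q^{d*}}|E\cap l_x|$. Since $E\subset V$, it suffices to bound $\max_{x}|V\cap l_x|$, and this is exactly where the hypothesis $(0,\ldots,0)\notin V$ enters. Fix $x\neq(0,\ldots,0)$ and view $s\mapsto Q(sx)$ as a one-variable polynomial in $\mathbb F_q[s]$: replacing each $x_i$ by $sx_i$ sends a monomial of total degree $k$ to $s^{k}$ times that monomial, so $\deg_s Q(sx)\leq \deg Q$. Its constant term is $Q(0,\ldots,0)$, which is nonzero precisely because the origin avoids $V$; hence $Q(sx)$ is not the zero polynomial and $V\cap l_x=\{\,sx:s\in\mathbb F_q^*,\ Q(sx)=0\,\}$ has at most $\deg Q$ elements.

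When $\deg Q$ is bounded independently of $q$ this finishes the proof: we get $\max_{x}|E\cap l_x|\leq\deg Q\ll_C 1$, so Lemma \ref{mainlem1} with $\beta=0$ gives $|\Pi(E,F)|\gg_c q$ as soon as $|E||F|\gg_C q^{d}$. This regime already covers the motivating examples, since the sphere $S_j$ and the translate $P+a$ are cut out by quadratics with $Q(0,\ldots,0)\neq 0$, so the bounded-degree case of the conjecture simultaneously recovers Theorem \ref{Thmsphere} and Theorem \ref{Pt} and exhibits them as two instances of a single line-counting principle.

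The main obstacle is the case of unbounded degree, and I expect this is why the statement is only conjectured. If $\deg Q$ is allowed to grow with $q$, a polynomial can satisfy $Q(0,\ldots,0)\neq 0$ while $s\mapsto Q(sx)$ vanishes on all of $\mathbb F_q^*$ (for instance if it is a nonzero multiple of $s^{q-1}-1$), so an entire punctured line $l_x$ may lie in $V$ and $\max_x|E\cap l_x|$ can be as large as $q-1$. Then Lemma \ref{mainlem1} only yields $\beta=1$, i.e. the weaker threshold $q^{d+1}$ of Theorem \ref{ThmAlex}. To recover the exponent $d$ I would split $E=G\cup B$ as in the proof of Lemma \ref{caselem}, with $B$ the part of $E$ lying on lines $l_x\subset V$ along which $|E\cap l_x|$ is large and $G=E\setminus B$; by construction $\max_x|G\cap l_x|\ll_C 1$, so $\Pi(G,F)$ is controlled by Lemma \ref{mainlem1}.

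The remaining difficulty is $B$, and here there is a useful dichotomy. If some line $l_{x_0}$ with $x_0\cdot y\neq 0$ for some $y\in F$ meets $E$ in a set $\{sx_0:s\in S\}$, then $\Pi(E,F)\supseteq\{s\,(x_0\cdot y):s\in S\}$ has exactly $|S|$ elements, so a single heavily-occupied non-degenerate line already produces a large dot-product set. The genuinely hard configuration is the degenerate one in which every direction $x_0$ of a full line in $V$ is orthogonal to all of $F$; ruling this out, or bounding the number of full lines a high-degree $V$ can contain together with their contribution to $\mathfrak B(E,F)$, does not follow from the Plancherel estimate alone and appears to require structural information about $Q$ beyond $Q(0,\ldots,0)\neq 0$. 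This is the step I expect to block a proof in full generality.
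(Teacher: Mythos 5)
You should first note that the paper itself offers no proof of this statement: it is presented as a conjecture, motivated by Theorem \ref{Thmsphere} and Theorem \ref{Pt}, so there is no paper argument to compare yours against. Judged on its own merits, your proposal is correct in what it actually claims. The line-counting step is right: for $x\neq(0,\ldots,0)$ the map $s\mapsto Q(sx)$ is a polynomial in $s$ of degree at most $\deg Q$ whose constant term is $Q(0,\ldots,0)\neq 0$, hence it is not identically zero and has at most $\deg Q$ roots, so $|E\cap l_x|\leq|V\cap l_x|\leq\deg Q$. When $\deg Q$ is bounded independently of $q$, Lemma \ref{mainlem1} with $\beta=0$ then gives the conclusion at the threshold $q^d$, and you are right that this single principle simultaneously recovers Theorem \ref{Thmsphere} (sphere of nonzero radius, $Q(0)=-j\neq 0$) and Theorem \ref{Pt} (translated paraboloid, $Q(0)=a_1^2+\cdots+a_{d-1}^2+a_d\neq 0$); this is visibly the mechanism the authors had in mind when formulating the conjecture.

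Your diagnosis of the unbounded-degree case can in fact be pushed to a stronger conclusion: under the literal reading of the statement, the conjecture is \emph{false}, not merely hard. Take
$$Q(x)=\prod_{i=1}^{d}\left(1-x_i^{q-1}\right),$$
so that $Q(0,\ldots,0)=1\neq 0$ while $Q(x)=0$ for every $x\neq(0,\ldots,0)$; then $V=\mathbb F_q^d\setminus\{(0,\ldots,0)\}$, and the conjecture would assert that \emph{every} pair $E,F$ with $(0,\ldots,0)\notin E$ and $|E||F|\gg_C q^d$ satisfies $|\Pi(E,F)|\gg_c q$. This contradicts the sharpness example of \cite{HIKR10} showing that the exponent $(d+1)/2$ in Proposition \ref{ProAlex} cannot be improved for general sets (removing the origin from that example does not change its size asymptotics). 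So the hypothesis $(0,\ldots,0)\notin V$ alone cannot carry the conjecture; some bound on $\deg Q$ independent of $q$ (or an equivalent structural hypothesis ruling out full punctured lines in $V$) is necessary, and once it is imposed your argument is already a complete proof. In short: you have resolved the statement under its natural bounded-degree reading and identified, correctly, that the unrestricted reading fails — which is more than the paper establishes.
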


\section{Sharpened results on the generalized dot product set}
In the previous section, we deduced the results on the dot product set by considering $\max_{x\in \mathbb F_q^{d*}} |E\cap l_x|.$
This method may give us a sharp result for the set $E\subset \mathbb F_q^d$ in the case when $|E\cap l_x| \sim |E\cap l_y|$ for almost  elements $x,y \in \mathbb F_q^{d*}.$
However, it may not be efficient in the case when the variation of $|E\cap \l_x|$ for $x\in \mathbb F_q^{d*}$ is relatively large.
In this section, we introduce a new approach to compensate the defect of the previous method
and provide improved statements of the results in the previous section.\\

Now, we derive a new formula to determine $|\Pi(E,F)|,$ which is much stronger than Lemma \ref{mainlem1}.
\begin{lemma}\label{mainlem2} Let $E,F\subset \mathbb F_q^d.$
Assume that the number of lines through the origin and a point in $E\setminus \{(0,\ldots,0)\}$ is at least $\sim q^{-\alpha}|E|$ for some $0\leq \alpha\leq 1.$
If $|E||F|\gg_C q^{d+\alpha}$ then there exists a set $E_0\subset E$ with $|E_0|\sim q^{-\alpha}|E|$ such that
$$ |\Pi(E_0, F)|\gg_c q.$$
\end{lemma}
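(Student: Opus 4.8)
The plan is to exploit the defining feature of the hypothesis: although $|E|$ itself may be large, the points of $E$ need only occupy roughly $q^{-\alpha}|E|$ lines through the origin, so selecting a single representative from each such line produces a much smaller set on which the line incidences are as small as possible. Concretely, recall that the lines $l_x$, $x\in\F_q^{d*}$, partition $\F_q^{d*}$ into equivalence classes of size $q-1$, with two points lying on the same line precisely when one is a nonzero scalar multiple of the other. Let $N$ denote the number of these lines meeting $E\setminus\{(0,\ldots,0)\}$; by assumption $N\gg_c q^{-\alpha}|E|$.

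First I would construct the set $E_0$. From the collection of $N$ lines through the origin that intersect $E\setminus\{(0,\ldots,0)\}$, I select a sub-collection of $\sim q^{-\alpha}|E|$ of them (possible since $N\gg_c q^{-\alpha}|E|$), and I choose exactly one point of $E$ lying on each selected line. Let $E_0\subset E$ be the resulting set. By construction $|E_0|\sim q^{-\alpha}|E|$, and since distinct points of $E_0$ lie on distinct lines through the origin,
$$\max_{x\in\F_q^{d*}}|E_0\cap l_x|\le 1.$$
Thus $E_0$ satisfies the hypothesis of Lemma \ref{mainlem1} with the optimal exponent $\beta=0$; note also that $E_0\subset E\setminus\{(0,\ldots,0)\}$, so the origin is automatically excluded.

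It then remains to apply Lemma \ref{mainlem1} to the pair $(E_0,F)$ and to verify its size hypothesis. Since $|E_0|\sim q^{-\alpha}|E|$, we have
$$|E_0||F|\sim q^{-\alpha}|E||F|\gg_C q^{-\alpha}\cdot q^{d+\alpha}=q^d,$$
where the standing assumption $|E||F|\gg_C q^{d+\alpha}$ is used, with the constant $C$ taken large enough to absorb the implied constant in $|E_0|\sim q^{-\alpha}|E|$. Lemma \ref{mainlem1} with $\beta=0$ now yields $|\Pi(E_0,F)|\gg_c q$, which is the desired conclusion.

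I do not expect a serious obstacle here: the argument is a clean reduction, and its only delicate point is bookkeeping. The conceptual heart is the observation that passing to one representative per line trades a possibly large $\max_{x}|E\cap l_x|$ for the best possible value $\beta=0$ in Lemma \ref{mainlem1}, at the cost of shrinking $E$ by the factor $q^{-\alpha}$ — precisely the factor that the hypothesis $|E||F|\gg_C q^{d+\alpha}$ is designed to compensate. The one place requiring care is the tracking of constants, to ensure that both the selection of $\sim q^{-\alpha}|E|$ lines and the resulting inequality $|E_0||F|\gg_C q^d$ hold simultaneously within the conventions governing $\sim$ and $\gg_C$.
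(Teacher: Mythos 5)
Your proposal is correct and matches the paper's own argument: both construct $E_0$ by selecting $\sim q^{-\alpha}|E|$ of the lines through the origin meeting $E\setminus\{(0,\ldots,0)\}$ and taking exactly one point of $E$ on each, so that $|E_0\cap l_x|\le 1$ for all $x\in\mathbb F_q^{d*}$ and $|E_0|\sim q^{-\alpha}|E|$. The only cosmetic difference is that you finish by invoking Lemma \ref{mainlem1} with $\beta=0$, whereas the paper re-runs that lemma's short proof directly (bounding ${\mathfrak B}(E_0,F)\le q^{-d}|F|$ and applying Lemma \ref{keylem1}), which is the identical computation.
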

\begin{proof} Note that we may assume that $(0,\ldots,0)\notin E.$
Let $n$ be an integer with $n\sim  q^{-\alpha}|E|.$ By assumption, we may choose $n$ lines, say that $l_j, j=1,2,\ldots, n,$ such that each of them contains at least one point in $E$, and is also passing through the origin. 
For each $j=1,2,\ldots, n$, choose exactly an element $x^j \in l_j \cap E$ and define 
$$ E_0=\{x^j: j=1,2,\ldots, n\}.$$
Since $|E_0|=n\sim q^{-\alpha}|E|$ for some $0\leq \alpha\leq 1,$
it suffices to prove that $ |\Pi(E_0, F)|\gg_c q$ as long as $|E||F|\gg_C q^{d+\alpha}.$
By the definition of $E_0,$ it is clear that $\sum_{s\in\mathbb F_q^*} E_0(sx) =1$ for each $x\ne (0,\ldots,0).$
This implies that ${\mathfrak B} (E_0,F) \leq \sum_{x\in \mathbb F_q^d} |\widehat{F}(x)|^2=q^{-d}|F|.$
Now applying Lemma \ref{keylem1} with $E_0, F$ yields that
$$ |\Pi(E_0, F)|\gg_c \min\left\{q,~~ \frac{|E_0||F|}{q^{d-1}} \right\}.$$
Since $|E_0|\sim q^{-\alpha}|E|$ , the statement of the theorem follows immediately from the assumption that $|E||F|\gg_C q^{d+\alpha}.$
\end{proof}

The value $\alpha$ given in Lemma \ref{mainlem2} must be contained in $[0,1].$ For example, if $E$ lies on a unit sphere $S_1:=\{x\in \mathbb F_q^d: \|x\|=1\},$ then $\alpha$ can be taken as zero.
In addition, observe that for each $E \setminus\{(0,\ldots,0)\}$, there are at least $\sim q^{-1}|E|$  such lines, because a line contains exactly $q$ points. Namely, $\alpha$ must be less than or equal to one. 
Also notice from Lemma \ref{mainlem2} that we can expect better dot product results whenever the set $E$ intersects with lots of such lines. In order words, the smaller $\alpha$ is, the better the result is.
As mentioned before, the  $(d+1)/2$ is the optimal exponent to obtain the conclusion of   Proposition \ref{ProAlex} for arbitrary set $E.$ 
Thus, the exponent $d+1$ in the assumption of  Theorem \ref{ThmAlex} is also optimal in general.
However, Lemma \ref{mainlem2} illustrates that  the exponent $d+1$ can be improved in the case when the set $E\setminus \{(0,\ldots,0)$ intersects with at least $|E|/q^{1-\varepsilon}$ lines through the origin for some $0<\varepsilon\leq 1.$\\

Now, we claim that Lemma \ref{mainlem2} is much superior to Lemma \ref{mainlem1}.
Indeed,  an upgraded version of Lemma \ref{mainlem1} can be given by a corollary of Lemma \ref{mainlem2}. More precisely, we can derive the following fact.
\begin{lemma}\label{mainlem3} Let $E, F\subset \mathbb F_q^d.$ Assume that $\max_{x\in \mathbb F_q^{d*}} |E\cap l_x| \ll_C q^\beta$ for some $0\le \beta \le 1.$ Then  if $|E||F|\gg_C q^{d+\beta},$
 there exists a set $E_0\subset E$ with $|E_0|\sim q^{-\beta}|E|$ such that
$$ |\Pi(E_0, F)|\gg_c q.$$
\end{lemma}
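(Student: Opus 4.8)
The plan is to deduce Lemma \ref{mainlem3} directly from Lemma \ref{mainlem2} by converting the pointwise hypothesis $\max_{x\in \mathbb F_q^{d*}}|E\cap l_x|\ll_C q^\beta$ into the line-counting hypothesis demanded by Lemma \ref{mainlem2}. First I would assume without loss of generality that $(0,\ldots,0)\notin E$, exactly as in the earlier lemmas; this is harmless, since deleting the origin changes $|E|$ by at most one and leaves $\Pi(E,F)$ unaffected on the relevant scale.

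The key step is a double-counting argument. Every nonzero $x\in E$ lies on exactly one line through the origin, namely $l_x$, so the collection $\{l_x : x\in E\}$ partitions $E$. Writing $N$ for the number of distinct such lines, I would sum the hypothesis over all of them to obtain
$$|E|=\sum_{l} |E\cap l| \le N\cdot \max_{x\in\mathbb F_q^{d*}}|E\cap l_x| \ll_C q^\beta N,$$
where the sum runs over the $N$ lines through the origin meeting $E$. Rearranging gives $N \gg_c q^{-\beta}|E|$, which is precisely the assertion that the number of lines through the origin and a point of $E\setminus\{(0,\ldots,0)\}$ is at least $\sim q^{-\beta}|E|$.

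With this bound in hand, I would apply Lemma \ref{mainlem2} with the choice $\alpha=\beta$. The hypothesis $|E||F|\gg_C q^{d+\beta}$ is exactly the hypothesis $|E||F|\gg_C q^{d+\alpha}$ needed there, and its conclusion furnishes a subset $E_0\subset E$ with $|E_0|\sim q^{-\beta}|E|$ satisfying $|\Pi(E_0,F)|\gg_c q$, as required. No separate estimation of $\mathfrak{B}(E,F)$ or direct appeal to Lemma \ref{keylem1} is needed, since all of that work is already absorbed into Lemma \ref{mainlem2}.

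The only genuinely substantive point is the counting inequality in the middle step; everything else is bookkeeping. I expect the main (and mild) obstacle to be reconciling the $\sim$ and $\gg_c$ conventions: one must check that the lower bound $N\gg_c q^{-\beta}|E|$ indeed supplies \emph{at least} $\sim q^{-\beta}|E|$ lines in the sense required to select the $n\sim q^{-\beta}|E|$ lines in the proof of Lemma \ref{mainlem2}, rather than an exact count. Since Lemma \ref{mainlem2} only uses a lower bound on the number of available lines, this presents no difficulty.
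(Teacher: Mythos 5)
Your proof is correct and follows essentially the same route as the paper: the paper's proof also deduces the line-counting hypothesis of Lemma \ref{mainlem2} from the bound $\max_{x\in \mathbb F_q^{d*}}|E\cap l_x|\ll_C q^\beta$ (stating as ``clear'' the double-counting step you spell out) and then applies Lemma \ref{mainlem2} with $\alpha=\beta$. Your explicit partition argument $|E|=\sum_l |E\cap l|\ll_C q^\beta N$ is a welcome elaboration of that step, but it is not a different approach.
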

\begin{proof} Since  $\max_{x\in \mathbb F_q^{d*}} |E\cap l_x| \ll_C q^\beta,$ it is clear that  the number of lines through the origin and a point in $E\setminus \{(0,\ldots,0)\}$ is at least $\sim q^{-\beta}|E|.$
Hence, the statement of the lemma follows immediately by Lemma \ref{mainlem2}.
\end{proof}
  Lemma \ref{mainlem3} enables us to deduce stronger conclusion than  Lemma \ref{keylem1}, because $  |\Pi(E_0, F)| \leq |\Pi(E,F)|$ for $E_0\subset E.$ 
For example,  Theorem \ref{ThmAlex}  can be improved by the following statement.
\begin{theorem}\label{sharpen}
 Let $E, F\subset \mathbb F_q^d.$
It $|E||F|\gg_C q^{d+1},$ then there exists a set $E_0 \subset E$ with $|E_0|\sim q^{-1}|E| $ such that
$$ |\Pi(E_0, F)|\gg_c q.$$
\end{theorem}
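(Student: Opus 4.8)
The plan is to obtain Theorem \ref{sharpen} as the special case $\beta=1$ of Lemma \ref{mainlem3}, in exact parallel with how Theorem \ref{ThmAlex} was derived from Lemma \ref{mainlem1}. The only ingredient the lemma requires is a uniform bound on $\max_{x\in \mathbb F_q^{d*}}|E\cap l_x|$, and here no structural hypothesis on $E$ is needed at all: since each line $l_x$ consists of at most $q$ points, we have the trivial bound
$$\max_{x\in \mathbb F_q^{d*}}|E\cap l_x|\le q,$$
so that the assumption $\max_{x\in \mathbb F_q^{d*}}|E\cap l_x|\ll_C q^\beta$ of Lemma \ref{mainlem3} is satisfied with $\beta=1$.

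With this value of $\beta$ fixed, I would simply invoke Lemma \ref{mainlem3}. Its hypothesis $|E||F|\gg_C q^{d+\beta}$ reduces to precisely our assumption $|E||F|\gg_C q^{d+1}$, and its conclusion produces a subset $E_0\subset E$ with $|E_0|\sim q^{-\beta}|E|=q^{-1}|E|$ satisfying $|\Pi(E_0,F)|\gg_c q$. This is verbatim the assertion of the theorem, so no additional argument is required.

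Since all of the analytic content has already been packaged upstream---the Fourier-analytic estimate for ${\mathfrak B}(E,F)$ in Lemma \ref{keylem1} and the line-selection mechanism in Lemma \ref{mainlem2}---there is no genuine obstacle remaining at this stage; the theorem is a direct corollary. The one conceptual point worth flagging is why passing to $E_0$ strengthens Theorem \ref{ThmAlex} rather than merely reproving it: by choosing a single representative point from each of the $\sim q^{-1}|E|$ lines through the origin meeting $E$, one forces $\sum_{s\in\mathbb F_q^*}E_0(sx)\le 1$ for every $x\neq(0,\ldots,0)$, so that ${\mathfrak B}(E_0,F)$ is controlled by the Plancherel quantity $q^{-d}|F|$ alone. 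This is exactly the input that lets the Cauchy--Schwarz bound of Lemma \ref{keylem1} run at full strength on $E_0$, yielding the same exponent $d+1$ while simultaneously locating a thin distinguished subset on which the dot product set is already large.
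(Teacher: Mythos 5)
Your proposal is correct and is exactly the paper's proof: the trivial bound $\max_{x\in\mathbb F_q^{d*}}|E\cap l_x|\le q$ lets you apply Lemma \ref{mainlem3} with $\beta=1$, and the conclusion is verbatim the theorem. The extra commentary about the line-selection mechanism matches the content already in Lemma \ref{mainlem2}, so nothing new is needed.
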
 
\begin{proof} Since $ |E\cap l_x| \leq q,$  this theorem is an immediate consequence of Lemma \ref{mainlem3}.
\end{proof}
Notice that  Lemma \ref{mainlem3} can be also used to deduce the improved conclusions of Theorem \ref{Thmsphere} and Theorem \ref{SphereDistance}.
We close this paper with an important remark on Theorem \ref{sharpen}.
\begin{remark} The authors in \cite{CEHIK09} studied the pinned distance sets and proved the following strong result (Theorem 2.2 in \cite{CEHIK09}).
\begin{proposition}\label{goodbut}  Let $E\subset \mathbb F_q^d, d\geq 2.$ If $|E|\ge q^{(d+1)/2},$ then there exists a set $E^\prime \subset E$ with $|E^\prime|\gg_c |E|$ such that
if $x\in E^\prime,$ then $|\Pi(x,E)|> q/2,$ where $\Pi(x,E):=\{x\cdot y: y\in E\}.$ \end{proposition}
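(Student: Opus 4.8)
The plan is to run the second-moment computation behind Lemma~\ref{keylem1} with the point $x$ \emph{pinned} rather than averaged away, and then to harvest the good subset $E'$ by a Markov-type averaging argument.

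First, for each fixed $x\in E$ I would introduce the pinned counting function
\begin{equation*}
\nu_x(t)=|\{y\in E: x\cdot y=t\}|,\qquad A_x:=\sum_{t\in\mathbb F_q}\nu_x^2(t).
\end{equation*}
Since $\sum_{t}\nu_x(t)=|E|$, the Cauchy--Schwarz inequality gives $|E|^2\le |\Pi(x,E)|\,A_x$, so that $|\Pi(x,E)|\ge |E|^2/A_x$. Consequently it suffices to produce a set $E'\subset E$ with $|E'|\gg_c|E|$ on which $A_x<2|E|^2/q$, as this forces $|\Pi(x,E)|>q/2$.

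Next I would expand $A_x$ by the orthogonality of $\psi$ exactly as in the proof of Lemma~\ref{keylem1}. Writing $A_x=\sum_{y,y'\in E}\mathbf{1}[x\cdot(y-y')=0]$ and inserting $q^{-1}\sum_{s\in\mathbb F_q}\psi(sx\cdot(y-y'))$, the term $s=0$ produces the main term while the remaining terms are squares of Fourier coefficients:
\begin{equation*}
A_x=\frac{|E|^2}{q}+q^{2d-1}\sum_{s\in\mathbb F_q^*}|\widehat E(sx)|^2.
\end{equation*}
Summing over $x\in E$, the error term is exactly $q^{2d-1}{\mathfrak B}(E,E)$, which I would control with the trivial line bound $|E\cap l_x|\le q$ together with the Plancherel identity $\sum_{m}|\widehat E(m)|^2=|E|/q^d$:
\begin{equation*}
\sum_{x\in E}\left(A_x-\frac{|E|^2}{q}\right)=q^{2d-1}{\mathfrak B}(E,E)\le q^{2d}\sum_{m\ne(0,\ldots,0)}|\widehat E(m)|^2\le q^{d}|E|.
\end{equation*}

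Finally I would apply Markov's inequality to the nonnegative quantity $A_x-|E|^2/q$. The bad set $\{x\in E: A_x\ge 2|E|^2/q\}$, on which $A_x-|E|^2/q\ge|E|^2/q$, then has cardinality at most $q^{d+1}/|E|$. Under the hypothesis $|E|\ge q^{(d+1)/2}$ (read with the implicit large constant $C$) this is at most $|E|/2$, so the complement $E'$ satisfies $|E'|\ge |E|/2\gg_c|E|$, and for every $x\in E'$ we obtain $|\Pi(x,E)|\ge |E|^2/A_x>q/2$, as claimed. The delicate point is precisely this last step at the critical exponent: for a general set the only available estimate is the trivial $|E\cap l_x|\le q$, which pins the error term at $q^{d}|E|$ and hence the bad set at size $\sim q^{d+1}/|E|$; this drops below a fixed fraction of $|E|$ exactly when $|E|$ surpasses $q^{(d+1)/2}$, which is the very mechanism that makes the exponent $(d+1)/2$ sharp in Proposition~\ref{ProAlex}.
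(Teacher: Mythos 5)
Your proof is correct in substance and is essentially the argument this statement rests on: the paper itself gives no proof of Proposition \ref{goodbut} --- it is quoted from \cite{CEHIK09}, with only the remark that it ``was proved by using an averaging argument'' --- and your pinned second-moment computation followed by Markov pigeonholing over $x\in E$ is precisely that averaging argument. Two small repairs are needed. First, the identity $\sum_{x\in E}\bigl(A_x-|E|^2/q\bigr)=q^{2d-1}\mathfrak{B}(E,E)$ presupposes $(0,\ldots,0)\notin E$: the term $x=(0,\ldots,0)$ contributes $q^{2d-1}(q-1)|\widehat{E}(0)|^2=(1-1/q)|E|^2$, which is not accounted for in $\mathfrak{B}(E,E)$ since that sum runs over $x\in\mathbb F_q^{d*}$ only. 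This is harmless --- discard the origin at the outset (as the paper does in Lemma \ref{keylem1} and Lemma \ref{mainlem2}), or absorb the extra term via $|E|^2\le q^d|E|$ at the cost of a factor of $2$ --- but it should be said. Second, you correctly isolate the delicate point, and it is genuinely there: at the literal threshold $|E|=q^{(d+1)/2}$ your Markov bound $q^{d+1}/|E|$ equals $|E|$, which is vacuous, so your argument delivers the conclusion exactly as stated only once $|E|\ge\sqrt{2}\,q^{(d+1)/2}$; alternatively, running Markov at level $2q^d$ instead of $|E|^2/q$ gives, at the exact threshold, a set $E'$ with $|E'|\ge|E|/2$ and $|\Pi(x,E)|\ge q/3$ for all $x\in E'$. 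Under this paper's stated conventions, where thresholds carry implicit constants that ``may be changed from one line to another,'' reading the hypothesis as $|E|\gg_C q^{(d+1)/2}$ is the intended interpretation, and with that reading your proof is complete.
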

This proposition is much superior to our Theorem \ref{sharpen} in the case when $E=F.$
The existence of such set $E^\prime$ in Proposition \ref{goodbut} was proved by using an averaging argument. Therefore, there is no information about how to choose an exact element $x$ of $E^\prime$ so that
$ |\Pi(x, E)|\gg_c q.$ 
On the other hand,  the proof of our Theorem \ref{sharpen} clearly indicates how to choose the set $E_0$. In practice, our Theorem \ref{sharpen} can be very useful.
\end{remark}

\end{document}